\newcommand{\R}{\mathbb R} 
\newcommand{\eps}{\varepsilon}
\newtheorem{theorem}{Theorem}
\newtheorem{remark}{Remark}
\newtheorem{example}{Example}
\newtheorem{proposition}{Proposition}
\newtheorem{corollary}{Corollary}
\newcommand*{\inlineequation}[2][]{%
  \begingroup
    \refstepcounter{equation}%
    \ifx\\#1\\%
    \else
      \label{#1}%
    \fi
    \relpenalty=10000 %
    \binoppenalty=10000 %
    \ensuremath{%
      #2%
    }%
    \hfill~\@eqnnum
  \endgroup
}
\begin{document}

\begin{frontmatter}
	
	
	
\title{\LARGE Least cores in energy community games}
	

\author[unipi-inf]{Giancarlo Bigi} 
\ead{giancarlo.bigi@unipi.it}
\author[unipi-ing]{Davide Fioriti}
\ead{davide.fioriti@unipi.it}
\author[unipi-inf]{Antonio Frangioni}
\ead{antonio.frangioni@unipi.it}
\author[unimib]{Mauro Passacantando\corref{cor1}}
\cortext[cor1]{Corresponding author}
\ead{mauro.passacantando@unimib.it}
\author[unipi-ing]{Davide Poli}
\ead{davide.poli@unipi.it}

\affiliation[unipi-inf]{organization={Dipartimento di Informatica, University of Pisa},
addressline={Largo B. Pontecorvo 3}, 
city={Pisa},
postcode={56127},
country={Italy}
}

\affiliation[unipi-ing]{organization={Department of Energy, Systems, Territory and Construction Engineering, University of Pisa},
addressline={Largo Lucio Lazzarino}, 
postcode={56122}, 
city={Pisa},
country={Italy}
}

\affiliation[unimib]{organization={Department of Business and Law, University of Milano-Bicocca},
addressline={via Bicocca degli Arcimboldi 8}, 
city={Milan},
postcode={20126}, 
country={Italy}}

\begin{abstract}
An energy community is modeled as a cooperative game, where a veto player is needed beyond the prosumers to manage the community, and the worth of a coalition is its benefit compared to the selfish behaviour of the prosumers. 
Properties of the game such as superadditivity, monotonicity, convexity and balancedness are analyzed both in the presence and absence of admission fees.
Then, the least core and its value are studied in detail, underlying the differences between the cases where the game is balanced or not.
In particular, an exact formula and computable bounds for the least core value are provided, and the maximum and minimum reward in the least core for the veto player are analyzed. 
Finally, a few computational approaches for the exact formula are developed and tested.
\end{abstract}



\begin{keyword}
Game theory \sep Energy community \sep Reward allocation \sep Least core value 

\MSC 91A12 \sep 91A80 \sep 91B32 \sep 49N15



\end{keyword}

\end{frontmatter}


\section{Introduction}

The paper deals with a family of coalitional games with transferable utility arising from the analysis of energy communities (see, e.g.,~\cite{Caramizaru2019,gjorgievski2021social}). 
An \emph{Energy Community} (EC) is a group of physically close \emph{prosumers} and a so-called \emph{aggregator}. Prosumers are entities such as households, small/medium companies and public institutions who produce and consume energy at the same time, although not all of them necessarily have both roles. 
In principle, each prosumer that actually produces energy can manage it independently to satisfy their own demand, drawing from the electric grid when needed, and selling excess energy to the grid when appropriate. 
Anyway, joining a local energy community can turn to be more efficient and profitable as governments are providing financial incentives to gather into communities (see, for instance, the rules of the Italian authorities~\cite{arera,gse}). 
Clearly, an energy community needs the overall demand and production of energy to be globally managed: an aggregator is a for-profit entity that monitors and manages energy  exchanges on behalf of the community so that each prosumer can draw from someone else's excess production to satisfy their need, or, vice-versa, provide their excess production to others. Therefore, an optimization problem arises naturally for each possible group of prosumers that are willing to join the aggregator in a community in order to maximize the overall benefit of the resulting EC. 
Similarly, each prosumer can compute their individual benefit via a suitable optimization problem that involves neither any energy exchange nor any other prosumer. 
These optimization problems may range from linear and nonlinear to mixed-integer programs.
For instance, combinatorial or integer variables may be needed to model the behaviour of some generation units and/or the need to acquire some integer amount of equipment such as solar panels and batteries~\citep{FiFP21,FiBiFrPaPo25}.
Yet, these problems may still be relatively easy to solve for the required size via the currently available efficient general-purpose solvers, which implies that their optimal values can be efficiently computed in practice.

Overall, the above setting gives rise to a coalitional Transferable Utility (TU) game, that we name \emph{Energy Sharing Game} (ESG): the players are the prosumers and the aggregator, while the characteristic function measures the overall advantage or disadvantage of a coalition with respect to the self-concerned situation where its members do not form a community. 
As a consequence, the worth of a coalition in ESG is provided by the optimal value of an optimization problem where prosumers control variables, while their presence in the coalition activates constraints and terms in the objective function. 
This framework shares some features with games arising from controlled programming problems \citep{DuSh84} and games of flow \citep{KaZe82b,KaZe82}. 
Indeed, the former games arise from ad hoc convex optimization problems where the players control some constraints, while the latter from network flow problems where they control variables. 
Anyway, ESG may bring in some nonconvexity as well due to the possible presence of fixed admission fees to the community. 
This marks a meaningful difference as the  games in~\citep{DuSh84,KaZe82b,KaZe82} are always totally balanced while ESG may be not. Moreover, the aggregator is a special player who is indispensable for allowing the exchange of energy and thus turns out to be a veto player. 

Since energy communities are awarded a financial incentive as a whole, the non-trivial problem of fairly subdividing their overall benefit among the prosumers and the aggregator arises.
Moreover, any valuable allocation should guarantee some kind of stability of the EC. 
In particular, it should not provide prosumers meaningful reasons to leave the community on the grounds of the possibility of obtaining better benefits on their own.
Tools from cooperative game theory have already been exploited in the literature for energy management, in particular the Shapley value is often used for fair reward sharing~\citep{chics2017coalitional,CREMERS2023120328,TAN2021100453}, but it suffers from stability issues (see, for instance,~\cite{DU2018383,9311637}) and its computation is very demanding.
Alternatively, simple rules that guarantee fairness through egalitarian allocations~\citep{DrFu91,Br07} could be employed though they may not guarantee stability as well.
On the contrary, the \emph{core} is generally considered the set of allocations to choose from in order to guarantee stability~\citep{AbEhLa25,FiBiFrPaPo25,HaMoMc19}.

The paper aims at analyzing stability for ESG, looking for those allocations that maximize the margin between the overall reward of any coalition and its worth.
The set of such allocations is known as the \emph{least core}~\citep{MaPeSh79}, and somehow provides the most stable ones to choose from.
The \emph{nucleolus}~\citep{Sc69} is a specific allocation in the least core, whose computation is viable for very specific classes of games (see, e.g.,~\cite{HaKlSoTiVe03,SoRaTi05}) while extremely demanding in general since it can always be performed through the resolution of either optimization problems of exponentially large sizes or an exponential number of them (see, e.g., \cite{BeFlNg21,NgTh16} and the references therein). 
The nucleolus has already been studied also for general veto games~\citep{ArFe97,Ba19} and some particular subclasses such as clan and big-boss games~\citep{MuNaPoTi88,PoMuTi90,PoPoTiMu89}, which anyhow do not include ESG except for very peculiar simple cases.
Anyway, the main focus of the paper is on the least core as a whole and therefore on the above margin (\emph{least core value}), since it may be worthy to choose an allocation with some further properties beyond stability (see, e.g., the computational approach in~\cite{FiBiFrPaPo25}), while the nucleolus enhances the latter only. 
In particular, the fair share of the reward of the aggregator should be dealt with some additional care, since its role is two-fold: on one side, it enables the formation of the community, while, on the other side, its actual participation is minor with respect to the prosumers. 

The paper is organized as follows. 
Section~\ref{sec:prem} summarizes the framework and main definitions of coalitional TU games, including the class of veto games and its main subclasses.
Section~\ref{sec:eg} introduces the Energy Sharing Game and its formulation as a veto TU game is given. 
A particular attention is given to the simple situation with a unique time step, that allows providing necessary and sufficient conditions for the game to be a clan or big-boss game. 
Sections~\ref{sec:nofees} and \ref{sec:fees} study the properties and the least core of the game without and with fixed admission fees, respectively. 
While the game is always balanced in the former case, this is not necessarily true in the latter.  
An exact formula and simple estimates for the least core value are provided whenever the game is balanced, and estimates only otherwise.  
Furthermore, a linear duality argument is exploited to check when the exact formula holds also in the unbalanced case.
Section~\ref{sec:aggshare} studies the maximum and minimum shares that the aggregator can obtain by the allocations in the least core: while the maximum is known exactly, only lower estimates for the minimum are provided. 
Finally, Section~\ref{sec:algo} addresses the computation of the exact formula for the least core value through suitable families of mixed-integer programs, whose resolution is viable when the underlying optimization problems of the prosumers are linear. 
Five different approaches have been developed and tested for problems up to 200 prosumers,   
and the results show that the fastest is quite efficient.

\section{Preliminaries}
\label{sec:prem}

A cooperative game with transferable utility (TU game) is a pair $(N,v)$, where $N$ is the set of players and $v : 2^N \to \R$, with $v(\emptyset) = 0$, is its \emph{characteristic function}. 
Any nonempty $S \subseteq N$ is a \emph{coalition} and $v(S)$ is its value (or worth).
The set of all players is referred to as the \emph{grand coalition}.
We define the set $\mathcal{P}=\{ S \subseteq N:\ S \neq \emptyset, \ S \neq N \}$ of \emph{proper} coalitions. 
The \emph{marginal contribution} of player $i \in N$ to the grand coalition is denoted by $M^v(i) := v(N) - v(N\setminus i)$. 
An allocation is a vector $x = [ x_i ]_{i \in N} \in \R^{|N|}$ representing the payoffs of the users, and we denote by $x(S) = \sum_{i \in S} x_i$ for any $S \subseteq N$ the total payoff of coalition $S$ where $x(\emptyset)$ is set to 0 by definition.

An allocation $x$ is called imputation if it is efficient, i.e., $x(N)=v(N)$, and individually rational, i.e., $x_i \geq v(i)$ holds for any $i \in N$.
A fundamental concept is that of the \emph{strong $\eps$-core} \citep{ShSh66} of the game $(N,v)$, i.e.,
$
 C_\eps(N,v) =
 \big\{ x \in \R^{|N|}:\ 
		x(N) = v(N), \
		x(S) \geq v(S) + \eps \quad \forall\ S \in \mathcal{P}
        \big\} .
$
The \emph{core}~\citep{Gi59} of the game is $C(N,v) = C_0(N,v)$, i.e., the set of allocations for which no coalition has any incentive to leave the grand coalition. 
A game with non-empty core is called \emph{balanced}. 
It is called \emph{totally balanced} if balancedness holds for any subgame, i.e., whenever the set of all players reduces to a subset of $N$. 

Since not all games are balanced, it is interesting to compute (the largest value of $\eps$ giving rise to) the \emph{least core} \citep{MaPeSh79}, namely $C_{\eps^*}(N,v)$ where $\eps^* 
    = 
    \max \{ \, \eps \,:\, C_\eps(N,v) \neq \emptyset \, \}$
provides the \emph{least core value}.
The \emph{nucleolus} \citep{Sc69} is the particular allocation in the least core that lexicographically minimizes the vector of excesses, $e(S,x)=v(S)-x(S)$, arranged in non-increasing order, for any imputation $x$.
However, the computation of the nucleolus is a very challenging task (see, e.g.,~\cite{BeFlNg21}).
The following classes of TU games are relevant for our discussion:
\begin{enumerate}[a)]
 \item A TU game $(N,v)$ is \emph{superadditive} if  $v(S \cup T) \geq v(S) + v(T)$ for all $S \subseteq N$, $T \subseteq N$ such that $S \cap T = \emptyset$.
 %
 \item A TU game $(N,v)$ is \emph{monotonic} if $v(S) \leq v(T)$ holds for any $S$, $T$ such that $S \subseteq T \subseteq N$; equivalently, $v(S \cup \{ i \}) \geq  v(S)$ for all $S \in \mathcal{P}$ and all $i \in N$. 
 
\item A TU game $(N,v)$ is \emph{convex} if its characteristic function $v$ is supermodular, i.e.,
\begin{equation}
v( S \cup T ) + v( S \cap T ) \geq v( S ) + v( T ) \quad \forall S \subseteq N , T \subseteq N .
\label{eq:convex}
\end{equation}
Monotonicity follows from superadditivity if $v(i) = 0$ for all $i \in N$.
Convex games are always superadditive and a full characterization of the core is available (see, e.g.,~\cite{MaZaSo20}).

\item A TU game $(N,v)$ is a \emph{veto game} if there exists a subset $T \in \mathcal{P}$ such that
\begin{equation}
v(S) = 0 \mbox{ for any } S \subseteq N \mbox{ such that } S \cap T \neq T;
\label{eq:veto}
\end{equation}

\item A monotonic veto game $(N,v)$ is a \emph{clan game} \citep{PoPoTiMu89} if the \emph{veto set} $T \subset N$ satisfying \eqref{eq:veto} also satisfies
\begin{equation}
\textstyle
v(N) - v(S) \geq \sum_{i \in N \setminus S} M^v(i) \mbox{ for any } S \supseteq T;
\label{eq:clan}
\end{equation}

 \item A clan game $(N,v)$ is a \emph{big-boss game} \citep{MuNaPoTi88} if the \emph{clan} $T \subset N$ satisfying \eqref{eq:veto} and \eqref{eq:clan} is a singleton $T = \{ i^* \}$. 
\end{enumerate}
Big-boss games are always superadditive, a full characterization of the core is available, and a closed formula for the nucleolus exists~\citep{MuNaPoTi88}.
Clearly, a big-boss game is a clan game but not vice versa (see, e.g., Proposition~\ref{prop:ex1-bbg-clan} a)). 
The nucleolus of a clan game can be computed by a polynomial algorithm that exploits only marginal contributions, while a full characterization of the core is available for this class of games as well~\citep{Ba19,PoPoTiMu89}.
A general veto game has no particular properties such as~\eqref{eq:clan} that can enforce similar results. 
Anyway, exponential algorithms to compute the nucleolus~\citep{ArFe97,Ba19} and alternative inequality constraints that describe the core~\citep{Ba16} have been designed.

The Energy Sharing Game introduced in the next section is a veto game, but not necessarily a big-boss or clan game except in very peculiar situations. 
Therefore, the corresponding results for the computation of the core and nucleolus cannot be exploited. 



\section{The Energy Sharing Game}
\label{sec:eg}

We model the energy community described in the introduction as a cooperative game with transferable utility whose set of players has the structure $N = U \cup \{ a \}$.
The set $U$ represents the \emph{users} (prosumers), while $a$ is the so-called \emph{aggregator} that allows cooperation between the users.
Each user $i \in U$ is characterised by the following elements:
\begin{itemize}

\item two different sets of variables $z_i \in \R^{n_i}$ and $y_i = [y^+_i , y^-_i] \in \R^{m}_+ \times \R^{m}_+$.
The former describes the internal production, consumption and management of energy and it is always freely available to the user.
The latter describes the amounts of energy that the user shares with the community: it can be positive in case a coalition is formed with some other users and necessarily the aggregator, otherwise it is necessarily fixed to 0. 
In particular, $y^+_i$ is the amount sent to the other users and $y^-_i$ is the amount received from the other users for each of the $m$ time steps of the time horizon.
 
 
\item a compact region $D_i \subseteq \R^{n_i} \times \R^{2m}_+$ that describes the feasible choices of the user, with the property that the section $D^z_i = \{ z_i \in \R^{n_i}: \ (z_i, 0) \in D_i\}$ is nonempty, i.e., it is feasible for the user not to belong to any coalition;
 
 
\item a continuous function $f_i:\R^{n_i+2m} \to \R$ that represents the individual benefit of the user $i$ for its own choices only;

\item an admission fee $c_i \geq 0$ that user incurs for joining any coalition.
\end{itemize}
When user $i$ is not part of any coalition and therefore can not rely on any contribution from the others, his maximum individual benefit is given by 
\begin{equation}
\label{eq:Pi}
\tag{$P_i$}
    b_i = \max \left\{  f_i(z_i,0): \ z_i \in D^z_i \right\}.
\end{equation}
When a coalition $S$ with at least two users and the aggregator is formed, its  maximum overall benefit is obtained by solving the following problem $(P_S)$:
\begin{subequations}
\begin{align}
b_S = \max\ & 
\sum_{i \in S \cap U}[ f_i(z_i,y_i) - c_i ] +
f_a \left( \sum_{i \in S \cap U} y^+_i \right) &
\label{eq:venabler-obj} 
\\
 & (z_i, y_i) \in D_i & i \in S \cap U
\label{eq:venabler-ind} 
\\
& 
\sum_{i \in S \cap U} y^+_i = \sum_{i \in S \cap U} y^-_i,
\label{e:balance}
\end{align}
\label{eq:venabler}
\end{subequations}
where $f_a : \R^m \to \R_+$ is a continuous function that 
represents a reward for the coalition due to the amount of shared energy and therefore we assume  $f_a(0) = 0$.
The constraint~\eqref{e:balance} links together the sharing variables of the different users by providing the overall balancing of the exchanged energy.
Notice that the solution $[(z^*_i, 0)]_{i \in S \cap U}$, with $z^*_i$ the optimal solution of $(P_i)$, is feasible for $(P_S)$: thus the maximum benefit of the coalition is not less than the sum of the maximum individual benefits, at least in the important special case with null admission fees (see Theorem~\ref{t:c=0properties} a)). 



The above optimization problems lead naturally to the cooperative Energy Sharing Game (ESG), whose characteristic function is given by
\begin{equation}
\label{eq:vSdef}
 v(S) = 
 \begin{cases}
 \textstyle b_S - \sum_{i \in S \cap U} b_i & \text{if } S \in \mathcal{P}^a_+ \cup N,
  \\
  0                              & \text{otherwise},
 \end{cases}
\end{equation}
where $\mathcal{P}^a_+ = \{S \in \mathcal{P}^a:\ |S \cap U| \geq 2 \}$ with $\mathcal{P}^a = \{S \in \mathcal{P}:\ a \in S\}$. 
In this way, $v(S)$ represents the increase or decrease in utility due to joining the coalition that the users in $S$ experience together with respect to what they would achieve going alone. 
Notice that the aggregator $a$ is a veto player due to the fact that no energy sharing is possible without $a$.

 


The above game is quite complex to analyze in full generality. 
Properties such as monotonicity and situations such as the presence of clans or big-bosses can be well studied when it has some further structure. 
In particular, we consider the following simple ESG that involves a unique time step and therefore can serve also as a building block of more complex situations.

\

\noindent\textbf{Simple ESG (SESG).}
\emph{
Consider ESG with a unique time step so that it is reasonable to assume that each prosumer acts just as a producer or a consumer.
Let $U_1$ be the set of producers, $U_2$ the set of consumers and $U=U_1 \cup U_2$.
Assume $D_i = Z_i \times Y_i$ for each user $i \in U$, with 
$Z_i \subset \R^{n_i}$ be arbitrary and $Y_i \subset\R^2$ given by: $Y_i=[0, p_i] \times \{ 0 \}$ if $i \in U_1$ and $Y_i=\{ 0 \} \times [0, q_i]$ if $i \in U_2$.
In plain words, each producer $i \in U_1$ can send up to a maximum amount $p_i>0$, while each consumer $i \in U_2$ can receive up to a maximum amount $q_i>0$.
Assume the benefit function has the form
\[
f_i(z_i,y_i) = 
\begin{cases}
  \hat{f}_i(z_i) - \beta y^+_i & \text{if $i \in U_1$}, 
  \\
  \hat{f}_i(z_i) + \alpha y^-_i & \text{if $i \in U_2$},
 \end{cases}
\]
where $\hat{f}_i: \R^{n_i} \to \R$ provides the benefit of user $i$ while not taking part in the coalition, $\alpha$ is the unitary saving of receiving energy from the community, and $\beta$ the unitary cost of sending energy to the community. 
The reward for any coalition is $\gamma>0$ for any exchanged unit within the coalition, i.e., $f_a(t) = \gamma t$. 
Assume $\alpha + \gamma - \beta > 0$, i.e., there is a coalition-wide benefit from exchanging (energy) even when taking into account the cost for sending and the benefit for receiving. 
In this framework the objective function~\eqref{eq:venabler-obj} reads
\[
\sum_{i \in S \cap U} [ \hat{f}_i(z_i) - b_i ]
+ \alpha \sum_{i \in S \cap U_2} y^-_i
- \beta \sum_{i \in S \cap U_1} y^+_i
+ \gamma \sum_{i \in S \cap U} y^+_i 
- \sum_{i \in S \cap U} c_i,
\]
which is separable in $z=(z_i)$ and $y=(y_i)$.
Since $b_i$ is the maximum of $\hat{f}_i$ on $Z_i$, the terms $\hat{f}_i(z_i)-b_i$ are non-positive and their maximum is zero.
Considering~\eqref{e:balance}, the  optimal value of~\eqref{eq:venabler} coincides with the optimal value of the function $(\alpha+\gamma-\beta) \sum_{i \in S \cap U_1} y^+_i - \sum_{i \in S \cap U} c_i$ over the same region.
Hence, whenever $S \in \mathcal{P}^a_+$ one has
\begin{align}\label{e:vS-ex1}
v(S) =
(\alpha+\gamma-\beta) \min \left\{ \, \sum_{i \in S \cap U_1} p_i \,,\,    
       \sum_{i \in S \cap U_2} q_i
    \, \right\}
- \sum_{i \in S \cap U} c_i,
\end{align}
whereas $v(S) = 0$ otherwise by the very definition~\eqref{eq:vSdef}. Relying on the convention that the sum over the empty set is 0, if $S \cap U_i= \emptyset$ for $i = 1$ or $i = 2$ (i.e., there are no producers or no consumers, and therefore no transfer is possible), then $v(S) = - \sum_{i \in S \cap U} c_i$.
Without any loss of generality, $\alpha+\gamma-\beta=1$ will be assumed in all the examples of the paper. 
}

In general, the ESG is not convex as the following example of a simple ESG shows.

\begin{example}
\label{ex:non-convex}
Consider SESG with $U_1 = \{ 1, 2, 3 \}$, $U_2 = \{ 4 \}$, $p_i = i$, $q_4 = 11/2$, $c_i = c \geq 0$ for any $i \in U$.
The values of some meaningful coalitions are: $v(\{ a , 1 , 2 , 4 \})=3-3c$, $v(\{ a , 1 , 3 , 4 \})=4-3c$, $v(\{ a , 1 , 4 \})=1-2c$, $v(N) = 11/2 - 4c$.
The game is not convex since for $S = \{ a , 1 , 2 , 4 \}$ and $T = \{ a , 1 , 3 , 4 \}$
one has: 
$v(S \cup T) + v(S \cap T) 
= 13/2 - 6c 
< 7-6c 
= v(S) + v(T)$.
\end{example} 

In simple ESGs monotonicity is strictly related to fixed costs and the structure of producers and consumers.



\begin{proposition}
\label{prop:ex1-mono}
Consider the Simple Energy Sharing Game.
\begin{enumerate}[a)]

\item Assume $|U_i| \geq 2$ for any $i=1,2$. 
Then, SESG is monotonic if and only if $c_j=0$ for any $j \in U$.

\item Assume $U_1=\{u_1\}$. 
Then, SESG is monotonic if and only if
\begin{align}
\label{e:ex1-mono}
& c_{1} \leq \min\left\{ p_1 , \ \min_{j \in U_2} q_j \right\}
\qquad
\text{and}
\qquad
c_j = 0 \quad \forall\ j \in U_2.
\end{align}

\item Assume $U_2=\{u_2\}$. 
Then, SESG is monotonic if and only if
\begin{align*}
& c_{2} \leq \min\left\{ q_1 , \ \min_{j \in U_1} p_j \right\}
\qquad
\text{and}
\qquad
c_j = 0 \quad \forall\ j \in U_1.
\end{align*}

\end{enumerate}
\end{proposition}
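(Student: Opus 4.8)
The plan is to argue directly from the closed form~\eqref{e:vS-ex1} of the characteristic function of the SESG, keeping in mind that $v(S) = 0$ whenever $a \notin S$ or $|S \cap U| \le 1$, that the very same formula~\eqref{e:vS-ex1} also evaluates $v(N)$, and that the empty-sum convention makes $v(S) = -\sum_{i \in S \cap U} c_i$ on any $S \in \mathcal{P}^a_+$ that misses all producers or all consumers. It is convenient to use the incremental form of monotonicity recalled in Section~\ref{sec:prem}: it suffices to check $v(S \cup \{i\}) \ge v(S)$ for all $S \in \mathcal{P}$ and all $i \in N$. I also tacitly assume $|U| \ge 3$, the natural non-degeneracy hypothesis (otherwise $\mathcal{P}^a_+ = \emptyset$, the game is trivially monotonic and the statement is vacuous); thus $|U_1|, |U_2| \ge 2$ in a), $|U_2| \ge 2$ in b), and $|U_1| \ge 2$ in c).

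\emph{Sufficiency.} Under each set of hypotheses the subtracted fee term $\sum_{i \in S \cap U} c_i$ vanishes on every $S \in \mathcal{P}^a_+$, so I would first show $v(S) \ge 0$ for all $S$. This is clear for the zero-value coalitions; for $S \in \mathcal{P}^a_+$ it holds in a) because the minimum of two nonnegative sums is nonnegative, and in b)--c) because either one side of $S$ is empty, whence $v(S) = 0$, or the unique producer $u_1$ (resp.\ consumer $u_2$) belongs to $S$, in which case $v(S) = \min\{ p_{u_1}, \sum_{j \in S \cap U_2} q_j \} - c_{u_1} \ge 0$: indeed $p_{u_1} \ge c_{u_1}$ and $\sum_{j \in S \cap U_2} q_j \ge \min_{j \in U_2} q_j \ge c_{u_1}$ by~\eqref{e:ex1-mono}, the sum being over a nonempty set since $|S \cap U| \ge 2$ and $u_1 \in S$. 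Then monotonicity along a one-player extension follows because enlarging a coalition never decreases either of the sums $\sum_{\,\cdot\, \cap U_1} p_i$ and $\sum_{\,\cdot\, \cap U_2} q_i$ and leaves the (null) fee term unchanged; the only step where the bound $v \ge 0$ is actually invoked is the passage from a zero-value coalition to one in $\mathcal{P}^a_+$, since a coalition in $\mathcal{P}^a_+$ can never become a zero-value one by adding a player.

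\emph{Necessity.} Here I would exhibit, whenever a condition fails, a pair of nested coalitions violating monotonicity, exploiting that a two-player coalition $\{a, j\}$ lies outside $\mathcal{P}^a_+$ and hence has value $0$, while the three-player coalition $\{a, j, k\}$ usually does not (and is proper, as $|N| \ge 4$). First, if some fee $c_\ell > 0$ with $\ell$ on a side of $U$ having at least two users, pick a second user $k$ on that same side: then $\{a, \ell, k\}$ meets only one of $U_1, U_2$, so $v(\{a, \ell, k\}) = -(c_\ell + c_k) < 0 = v(\{a, \ell\})$ although $\{a, \ell\} \subset \{a, \ell, k\}$ --- a contradiction. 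This forces all fees to vanish in a), and all fees on the ``many'' side to vanish in b) and c). With those fees set to $0$, in b) I would compare $\{a, u_1\} \subset \{a, u_1, k\}$ for an arbitrary $k \in U_2$: monotonicity gives $0 = v(\{a, u_1\}) \le v(\{a, u_1, k\}) = \min\{ p_{u_1}, q_k \} - c_{u_1}$, i.e.\ $c_{u_1} \le \min\{ p_{u_1}, q_k \}$, and letting $k$ range over $U_2$ yields precisely~\eqref{e:ex1-mono}. Case c) is identical with $u_2$ in place of $u_1$ and the pairs $(U_1, p)$ and $(U_2, q)$ interchanged, a relabelling under which~\eqref{e:vS-ex1} is invariant; so c) is literally b) applied to the relabelled game and needs no separate argument.

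The only mildly delicate point I anticipate is the bookkeeping: deciding, for each coalition at hand, whether it belongs to $\mathcal{P}^a_+$ (so that the transfer formula applies, in particular for $S = N$) or has value $0$, and applying the empty-sum convention correctly for coalitions missing all producers or all consumers. Past that, the argument reduces to monotonicity of $t \mapsto \min\{\cdot, t\}$ and to a handful of two- and three-player comparisons, so I do not expect any real obstacle.
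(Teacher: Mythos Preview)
Your proof is correct and follows the paper's approach: the necessity counterexamples (same-side pairs $\{a,\ell\}\subset\{a,\ell,k\}$, then $\{a,u_1\}\subset\{a,u_1,k\}$) are exactly those the paper uses, and the sufficiency reasoning via the closed form~\eqref{e:vS-ex1} and nonnegativity of $v$ is the paper's argument reorganised slightly. One slip in the phrasing for cases b)--c): the fee term does \emph{not} vanish on every $S\in\mathcal{P}^a_+$ (it equals $c_{u_1}$ whenever $u_1\in S$), and adding $u_1$ to an all-$U_2$ coalition $S\in\mathcal{P}^a_+$ raises it from $0$ to $c_{u_1}$, so your ``fee unchanged'' clause does not cover that transition. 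No harm is done, since $v(S)=0$ in that situation and your bound $v\ge 0$ still applies; just be explicit that the $v\ge 0$ step is also needed for extensions within $\mathcal{P}^a_+$ whose starting coalition misses the lone producer (or consumer).
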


\begin{proof}
a) If the costs are all zero, then monotonicity follows immediately from \eqref{e:vS-ex1}.
Vice versa, if SESG is monotonic, then $0=v(S) \leq v(S \cup \{k\})=-c_j-c_k$ for any coalition $S=\{a, j\}$, with $j,k \in U_i$ and $j \neq k$. 
Hence, $c_j=c_k=0$ follows since they are both non-negative.

\noindent b) Suppose that \eqref{e:ex1-mono} holds and consider any $S \in \mathcal{P}^a$ and $T \supset S$.
If $u_1 \notin T$, then $v(S)=v(T)=0$.
If $u_1 \in T \setminus S$, then
$
v(T) 
=
\min\left\{ p_1 , \ \sum_{j \in T \cap U_2} q_j \right\} - c_{1}
\geq
\min\left\{ p_1 , \ \min_{j \in U_2} q_j \right\} - c_{1}
\geq 0=v(S).
$
If $u_1 \in S$, then
$
v(T) 
=
\min\left\{ p_1 , \ \sum_{j \in T \cap U_2} q_j \right\} - c_{1}
\geq
\min\left\{ p_1 , \ \sum_{j \in S \cap U_2} q_j \right\} - c_{1}
= v(S).
$
If $a \notin S$, then the required inequality follows similarly. Therefore, SESG is monotonic.

Vice versa, if SESG is monotonic, then the same argument as in case a) shows $c_j=0$ for any $j \in U_2$. Moreover, 
$
0
=
v(\{a,u_1\})
\leq
v(\{a,u_1,j\})
=
\min\{ p_1, q_j \} - c_{1} 
$
holds for any $j \in U_2$. Hence, condition \eqref{e:ex1-mono} holds.

\noindent c) Analogous to b).
\end{proof}

When SESG is monotonic, a characterization of the presence of a clan or a big-boss is available relying on the amounts of production and consumption only.

\begin{proposition}
\label{prop:ex1-bbg-clan}
Consider a monotonic Simple Energy Sharing Game.

%
\begin{itemize}
 \item[a)] If $U_1=\{u_1\}$ (resp. $U_2=\{u_2\}$), then the SESG is a clan game, where the clan is given by $\{a, u_1\}$ (resp. $\{a, u_2\}$).
\end{itemize}

Further, assume $|U_i| \geq 2$ for any $i=1, 2$.
Then, $a$ is the only veto player and the following statements hold:
\begin{itemize}
\item[b)] If SESG is a big-boss game, then $\sum_{j \in U_2} q_j \neq \sum_{j \in U_1} p_j$.

\item[c)] If $\sum_{j \in U_2} q_j < \sum_{j \in U_1} p_j$, then SESG is a big-boss game if and only if 
\begin{equation}
\textstyle
\sum_{j \in U_2} q_j \leq \sum_{j \in U_1 \setminus i } p_j,
\qquad \forall\ i \in U_1,
\label{e:bbg}
\end{equation}
or equivalently $M^v(i)=0$ holds for any $i \in U_1$.
 
 \item[d)] If $\sum_{j \in U_2} q_j > \sum_{j \in U_1} p_j$, then SESG is a big-boss game if and only if
       \[
        \textstyle
        \sum_{j \in U_1} p_j \leq \sum_{j \in U_2 \setminus i} q_j,
        \qquad \forall\ i \in U_2,
       \]
 or equivalently $M^v(i)=0$ holds for any $i \in U_2$.

\end{itemize}
\end{proposition}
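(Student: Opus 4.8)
The plan is to run everything off the closed form \eqref{e:vS-ex1}, which monotonicity simplifies drastically: by Proposition~\ref{prop:ex1-mono} all the admission fees on the ``large'' side of the game vanish, so, writing $P_S=\sum_{i\in S\cap U_1}p_i$, $Q_S=\sum_{i\in S\cap U_2}q_i$ and $P=P_N$, $Q=Q_N$, one has $v(S)=\min\{P_S,Q_S\}$ on $\mathcal{P}^a_+$ (minus the single surviving fee $c_1$ when $U_1=\{u_1\}$, or $c_2$ when $U_2=\{u_2\}$) and $v(S)=0$ otherwise. Every claim then becomes a statement about these piecewise-linear set functions, and the only non-trivial ingredient, used repeatedly, is the elementary bound: for $d\ge 0$ and nonnegative reals $a_1,\dots,a_k$ one has $\sum_{\ell}\max\{0,a_\ell-d\}\le\max\{0,\sum_\ell a_\ell-d\}$ (restrict to the indices with $a_\ell>d$ and use $d\ge 0$ together with the fact that there is at least one such index).

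For (a) I would take $U_1=\{u_1\}$, the case $U_2=\{u_2\}$ being symmetric. First I would check that $T=\{a,u_1\}$ satisfies \eqref{eq:veto}: if $a\notin S$ then $v(S)=0$ by definition, and if $u_1\notin S$ then $S\cap U_1=\emptyset$, so either $S\notin\mathcal{P}^a_+$ or $v(S)=\min\{0,Q_S\}-\sum_{j\in S\cap U_2}c_j=0$ because monotonicity forces $c_j=0$ on $U_2$. Monotonicity being assumed, it only remains to verify the clan inequality \eqref{eq:clan} for every $S\supseteq\{a,u_1\}$. Here $N\setminus S\subseteq U_2$, the fee $c_1$ cancels from $v(N)-v(S)$ and from every $M^v(j)$ with $j\in U_2$, and \eqref{eq:clan} reduces to $\min\{p_1,Q\}-\min\{p_1,Q_S\}\ge\sum_{j\in N\setminus S}\bigl(\min\{p_1,Q\}-\min\{p_1,Q-q_j\}\bigr)$. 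I would split on whether $Q\le p_1$ (both sides collapse to $\sum_{j\in N\setminus S}q_j$) or $Q>p_1$ (with $d:=Q-p_1$, the left side is $\max\{0,\sum_{j\in N\setminus S}q_j-d\}$ and the right side $\sum_{j\in N\setminus S}\max\{0,q_j-d\}$), and close with the elementary bound. The point to watch, and where I expect the proof to cost the most care, is the coalitions $S\supseteq\{a,u_1\}$ with $|S\cap U|\le 1$ — essentially $S=\{a,u_1\}$ itself — which are not covered by \eqref{e:vS-ex1}: there one argues directly from $v(S)=0$ and from $c_j=0$ on $U_2$.

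For (b)–(d) assume $|U_i|\ge 2$ for $i=1,2$, so all fees vanish and $v(S)=\min\{P_S,Q_S\}$ on $\mathcal{P}^a_+$. First I would show $a$ is the only veto player: for any user $i$ the coalition $\{a\}\cup(U\setminus i)$ has strictly positive worth by \eqref{e:vS-ex1}, which is exactly where $|U_1|,|U_2|\ge 2$ is needed, hence $i$ is not a veto player, whereas $a$ clearly is. Since the members of the clan of a clan game are always veto players (by \eqref{eq:veto}) and the clan is nonempty, the clan can only be $\{a\}$, so in this regime a clan game is automatically a big-boss game with boss $a$. Next I would record the marginals: when $Q<P$ one has $v(N)=Q$, $M^v(i)=\max\{0,p_i-(P-Q)\}$ for $i\in U_1$, and $M^v(j)=q_j$ for $j\in U_2$, which makes the stated equivalence ``$M^v(i)=0$ for all $i\in U_1$'' $\Leftrightarrow$ \eqref{e:bbg} immediate. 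For the ``if'' direction of (c): if $M^v(i)=0$ for all $i\in U_1$, then \eqref{eq:clan} for an arbitrary $S\ni a$ reads $Q-v(S)\ge Q-Q_S$, i.e.\ $v(S)=\min\{P_S,Q_S\}\le Q_S$, which always holds, so the game is a big-boss game. For the ``only if'' direction, and for all of (b): suppose some $i^*\in U_1$ has $M^v(i^*)>0$ — which is automatic in case (b), where $P=Q$ forces $M^v(i)=p_i>0$ for every $i\in U_1$ — and remove $i^*$ together with an arbitrary $j\in U_2$ (both removable since $|U_1|,|U_2|\ge 2$); a short computation gives $v(N)-v(N\setminus\{i^*,j\})=\max\{M^v(i^*),q_j\}<M^v(i^*)+q_j=\sum_{i\in\{i^*,j\}}M^v(i)$ because both $M^v(i^*)>0$ and $q_j>0$, contradicting \eqref{eq:clan}. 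Part (d) is the mirror image of (c), with the roles of $U_1,U_2$ and of $p,q$ interchanged.

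To summarise, the only genuinely non-routine step is the elementary inequality on $\sum_\ell\max\{0,a_\ell-d\}$, which powers both the positive direction in (a) and the ``if'' part of (c); everything else is careful bookkeeping of the worths of the coalitions $N\setminus\{i\}$ and $N\setminus\{i,j\}$ and of the marginals $M^v(\cdot)$, together with the recurring check that these coalitions lie in $\mathcal{P}^a_+$ so that \eqref{e:vS-ex1} applies. The main obstacle, as flagged above, is precisely this last check for the small coalitions in part (a).
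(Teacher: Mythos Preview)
Your treatment of (b)--(d) is correct and a touch cleaner than the paper's: where the paper plugs $S=\{a\}$ into~\eqref{eq:clan} to force $\sum_{i\in U_1}M^v(i)\le 0$ globally, you violate~\eqref{eq:clan} locally at the pair-removal coalition $N\setminus\{i^*,j\}$, which handles (b) and the ``only if'' of (c) in one stroke. (One inaccuracy in your summary: the ``if'' direction of (c) uses only $\min\{P_S,Q_S\}\le Q_S$, not the elementary bound.)

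For (a), however, the obstacle you flag at $S=\{a,u_1\}$ is genuinely fatal when $c_1>0$ and $|U_2|\ge 2$. Your claim that ``$c_1$ cancels from $v(N)-v(S)$'' is exactly what fails there: since $\{a,u_1\}\notin\mathcal{P}^a_+$, the definition~\eqref{eq:vSdef} gives $v(\{a,u_1\})=0$, not $-c_1$, so the clan inequality at this $S$ reads
\[
\min\{p_1,Q\}-c_1 \;\ge\; \sum_{j\in U_2}M^v(j),
\]
with no $c_1$ on the right (the marginals $M^v(j)$ are computed from coalitions in $\mathcal{P}^a_+$, where $c_1$ does cancel). In the regime $p_1\ge Q$ the right side equals $\sum_j q_j=Q$, forcing $c_1\le 0$. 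Concretely, $U_1=\{1\}$, $U_2=\{2,3\}$, $p_1=10$, $q_2=q_3=3$, $c_1=2$, $c_2=c_3=0$ is monotonic by Proposition~\ref{prop:ex1-mono}(b), yet $v(N)-v(\{a,1\})=4<6=M^v(2)+M^v(3)$, so the game is \emph{not} a clan game with clan $\{a,u_1\}$. The paper's own proof makes the same slip, silently applying the formula $v(S)=\min\{p_1,\sum_{j\in W}q_j\}-c_1$ at $W=\emptyset$. Your argument for (a) becomes complete once one adds the hypothesis $c_1=0$ (equivalently, all fees vanish), and under that assumption your elementary bound $\sum_\ell\max\{0,a_\ell-d\}\le\max\{0,\sum_\ell a_\ell-d\}$ neatly replaces the paper's three-case split.
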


\begin{proof}
a) There are two veto players: $a$ and $u_1$. 
Let $S = \{ a, u_1 \} \cup W$, where $W \subseteq U_2$. Then, the monotonicity of the game guarantees that \eqref{e:ex1-mono} holds. Hence, we get 
$v(N) = \min\left\{ \, p_1 \,,\, \sum_{j \in U_2} q_j \, \right\}-c_{1}$;
$v( N \setminus \, i) 
= \min\left\{ \, p_1 \,,\, \sum_{j \in U_2 \setminus  i } q_j \, \right\}-c_1$, for all $i \in U_2$; $v(S) = \min\left\{ \, p_1 \,,\,  \sum_{j \in W} q_j \, \right\}-c_{1}$.
We distinguish three cases:
If $p_1 \geq \sum_{j \in U_2} q_j$, then $
       v(N) - v(S) =
       \sum_{i \in U_2} q_i -  \sum_{i \in W} q_i =
       \sum_{i \in U_2 \setminus W} q_i =
       \sum_{i \in N \setminus S} M^v(i)$.

If $p_1 < \sum_{j \in U_2} q_j$ and $p_1 \leq \sum_{j \in W} q_j$, then $v(N) = v(S) = p_1 - c_{1}$.  Moreover, for all $i \in U_2 \setminus W$ we have $\sum_{j \in U_2 \setminus i} q_j \geq \sum_{j \in W} q_j \geq p_1$, hence $M^v(i) = 0$ for any $i \in U_2 \setminus W$ and $v(N) - v(S) = 0 = \sum_{i \in N \setminus S} M^v(i)$.

If $p_1 < \sum_{j \in U_2} q_j$ and $p_1 > \sum_{j \in W} q_j$, then $v(N)=p_1-c_{1}$ and $v(S)=(\sum_{j \in W} q_j) -c_{1}$. Moreover, for all $i \in U_2 \setminus W$ we have
$$
M^v(i) 
= 
p_1 - \min\left\{ p_1 , \sum_{j \in U_2 \setminus i } q_j \right\} 
=    
\max\left\{ 0 , p_1 - \sum_{j \in U_2 \setminus i } q_j \right\}.
$$
Let $K = \{ \, i \in U_2 \setminus W \,:\, p_1 > \sum_{j \in U_2 \setminus i } q_j \, \}$. If $K = \emptyset$, then $M^v(i) = 0$ holds for all $i \in U_2 \setminus W$, and
$v(N) - v(S) = p_1 - \sum_{j \in W} q_j > 0 =
\sum_{i \in N \setminus S} M^v(i)$. 
If $|K| \geq 1$, then the following chain of equalities and inequalities holds:
\begin{align*}
\sum_{i \in N \setminus S} M^v(i) =  & 
\sum_{i \in K} \left( p_1 -\sum_{j \in U_2 \setminus i } q_j \right) 
=
\sum_{i \in K} \left( p_1 +q_i-\sum_{j \in U_2} q_j \right) 
\\
=  & 
|K|\left( p_1 - \sum_{j \in U_2} q_j \right) + \sum_{i \in K} q_i
\leq |K|\left( p_1 - \sum_{j \in U_2} q_j \right) + \sum_{j \in U_2 \setminus W} q_j
\\
= & 
|K|\left( p_1 - \sum_{j \in U_2} q_j \right) + 
\sum_{j \in U_2} q_j - \sum_{j \in W} q_j 
\\
=  & 
(|K|-1)\left( p_1 -\sum_{j \in U_2} q_j \right) +
p_1 - \sum_{j \in W} q_j 
\leq \, 
\textstyle
p_1 - \sum_{j \in W} q_j = v(N) - v(S).
\end{align*}
Therefore, SESG is a clan game where $\{a,u_1\}$ is the clan.

\noindent b) Assume, by contradiction that $\sum_{j \in U_2} q_j=\sum_{j \in U_1} p_j$. 
Then, we have $v(N)=\sum_{j \in U_2} q_j$, $M^v(i) = q_i$ for all $i \in U_2$ and $M^v(i) = p_i$ for all $i \in U_1$. Hence,~\eqref{eq:clan} implies
$
\textstyle
v(N) = v(N) - v( \{ \, a \, \} ) \geq
\sum_{j \in U} M^v(j) =
\sum_{j \in U_1} M^v(j) + \sum_{j \in U_2} M^v(j) =
2v(N),      
$
that is $v(N)=0$, which is impossible.

\noindent c) The assumption guarantees $v(N) = \sum_{j \in U_2} q_j$, $M^v(i) = q_i$ for all $i \in U_2$ and $M^v(i) = 0$ for any $i \in U_1$. 
The coalition $S = \{ a \}$ satisfies~\eqref{eq:clan}: in fact,
$v(N) - v(\{a\}) = v(N) = \sum_{j \in U_2} q_j =
 \sum_{j \in U} M^v(j)$.
For any other coalition $S \supset \{a\}$ we have
\begin{align*}
 v(N) - v(S) =
 &  \textstyle
    \sum_{j \in U_2} q_j 
    - \min\left\{ \sum_{j \in S \cap U_1} p_j \ , \ \sum_{j \in S \cap U_2} q_j \right\} 
    \\
 = & \textstyle
 \max\left\{ \sum_{j \in U_2} q_j - \sum_{j \in S \cap U_1} p_j \ , \
             \sum_{j \in U_2 \setminus (S \cap U_2)} q_j
      \right\} 
      \\
 \geq & \textstyle
 \sum_{j \in U_2 \setminus (S \cap U_2)} q_j = \sum_{j \in U_2 \setminus (U_2 \cap S)} M^v(j) \\
 = & \textstyle
 \sum_{j \in U_1 \setminus (U_1 \cap S)} M^v(j) +
 \sum_{j \in U_2 \setminus (U_2 \cap S)} M^v(j)
 = \sum_{j \in N \setminus S} M^v(j). 
\end{align*}
Therefore, SESG is a big-boss game. 
Vice versa, if SESG is a big-boss game, then
\[
v(N) = v(N) - v( \{ a \} ) \geq
\sum_{j \in U} M^v(j) =
\sum_{j \in U_1} M^v(j) + \sum_{j \in U_2} M^v(j) =
\sum_{j \in U_1} M^v(j) + v(N),      
\]
that implies $M^v(i) = 0$ for any $i \in U_1$, so that
$
    \sum_{j \in U_2} q_j = v(N) = v(N \setminus i) 
    =\linebreak
    \min 
    \left\{
    \sum_{j \in U_2} q_j , \sum_{j \in U_1 \setminus i} p_j
    \right\}$ for any $i \in U_1$. Hence condition~\eqref{e:bbg} holds.

\noindent d) Analogous to c).
\end{proof}

Notice that Proposition
\ref{prop:ex1-bbg-clan} a) actually provides a full characterization of clan games that are not big-boss games, namely $\min\{|U_1|, |U_2|\}=1$.
With the aid of Proposition~\ref{prop:ex1-bbg-clan} we can illustrate that ESG is unrelated to clan/big-boss games:
\begin{itemize}
 %
 \item Assume $|U_1| \geq 2$, $|U_2|\geq 2$, $c_i=0$ for any $i \in U$ and $p_i \geq \sum_{j \in U_2} q_j$ for any $i \in U_1$: the only veto player is $a$ and SESG is a big-boss game since condition~\eqref{e:bbg} holds.

 \item Assume $|U_1| \geq 2$, $|U_2|\geq 2$, $c_i=0$ for any $i \in U$ and $p_i=p$ for any $i \in U_1$, with $|U_1|-1 < (\sum_{j \in U_2} q_j)/p < |U_1|$.
 The only veto player is $a$ and SESG is not a big-boss game since condition~\eqref{e:bbg} does not hold.

 %

 %
\end{itemize}


\section{The least core for games without admission fees}
\label{sec:nofees}

In this section, we consider ESG in the case of no admission fees, i.e., when $c_i = 0$ for all $i \in U$. This makes the analysis somewhat simpler, since the game enjoys some useful properties (see Theorem~\ref{t:c=0properties} below).
In order to stress the difference with the general case, we denote the characteristic function of the game by $v_0$ and its least core value by $\eps^*_0$. 
Furthermore, an allocation is ordered as $x=(x_1,\dots,x_{|U|},x_a)$, i.e., the last component represents the share of the  aggregator.

\begin{theorem}
\label{t:c=0properties}
The ESG has the following properties:
\begin{enumerate}[a)]
\item $v_0(S) \geq 0$ for any $S \subseteq N$

\item the game is superadditive

\item the game is monotonic

\item the game is balanced and $(0,\dots,0,v_0(N)) \in C(N,v_0)$.
\end{enumerate}
\end{theorem}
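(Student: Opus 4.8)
The plan is to derive (a) and (b) directly from the structure of the optimization problems $(P_i)$ and $(P_S)$, and to obtain (c) and (d) as formal consequences. I would first note that \eqref{eq:vSdef} should be read so as to also define $v_0(N)=b_N-\sum_{i\in U} b_i$, since superadditivity and the core both involve the grand coalition; all maxima below are attained because each $D_i$ is compact and $f_i$, $f_a$ are continuous. For (a): for $S\notin\mathcal{P}^a_+\cup\{N\}$ one has $v_0(S)=0$, and for the remaining $S$ I would exhibit the ``autarky'' point $[(z^*_i,0)]_{i\in S\cap U}$ with $z^*_i$ optimal for $(P_i)$. It is feasible for $(P_S)$ because $(z^*_i,0)\in D_i$ (as $z^*_i\in D^z_i$) and \eqref{e:balance} degenerates to $0=0$; since $c_i=0$ and $f_a(0)=0$, its objective value equals $\sum_{i\in S\cap U} b_i$, whence $b_S\ge\sum_{i\in S\cap U} b_i$ and $v_0(S)\ge 0$.

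For (b) I would argue by cases on whether $v_0(S)$ and $v_0(T)$ vanish. If both are zero, superadditivity is immediate from (a). Otherwise, since $a$ is a veto player, at most one of the disjoint coalitions has positive worth, and that one --- say $S$ --- must contain $a$; hence $v_0(T)=0$ while $S\cup T$ again lies in $\mathcal{P}^a_+\cup\{N\}$, so the required inequality reduces to $b_{S\cup T}\ge b_S+\sum_{j\in T\cap U} b_j$. I would get this by taking an optimal solution of $(P_S)$ and appending to it, for each new user $j\in T\cap U$, its autarky choice $(z^*_j,0)$ with $z^*_j$ optimal for $(P_j)$: the combined point is feasible for $(P_{S\cup T})$ --- the per-user constraints \eqref{eq:venabler-ind} hold and the balance \eqref{e:balance} is untouched because the new users contribute $0$ on both of its sides --- and its objective value is $b_S+\sum_{j\in T\cap U} b_j$, since the reward term still only sees $\sum_{i\in S\cap U} y^{+}_i$. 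Hence $v_0(S\cup T)\ge v_0(S)=v_0(S)+v_0(T)$.

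Parts (c) and (d) are then short. For $S\subseteq T\subseteq N$, (b) and (a) give $v_0(T)\ge v_0(S)+v_0(T\setminus S)\ge v_0(S)$, which is (c) (equivalently, since $v_0(i)=0$ for every $i\in N$, monotonicity follows from superadditivity as recalled in Section~\ref{sec:prem}). For (d) I would just verify that $x=(0,\dots,0,v_0(N))$ satisfies the core inequalities: $x(N)=v_0(N)$; for $S\in\mathcal{P}$ with $a\notin S$ one has $x(S)=0=v_0(S)$; and for $S\in\mathcal{P}$ with $a\in S$ one has $x(S)=v_0(N)\ge v_0(S)$ by (c). Thus $x\in C(N,v_0)$, the core is nonempty, and the game is balanced.

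None of the steps is deep, so I do not expect a serious obstacle. The part that needs the most care is the feasibility-and-objective bookkeeping for the autarky point in (a) and for the combined solution in (b): one must check that both the per-user constraints \eqref{eq:venabler-ind} and the single coupling constraint \eqref{e:balance} survive the extension, and that the reward term $f_a$ contributes nothing extra. The only other thing worth settling explicitly at the outset is the reading of \eqref{eq:vSdef} at the grand coalition.
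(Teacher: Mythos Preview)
Your proposal is correct and follows essentially the same approach as the paper: the autarky point $[(z^*_i,0)]$ for (a), the ``extend an optimal solution of $(P_S)$ by autarky choices for the new users'' construction for (b), and the derivation of (c) and (d) from (a)--(b) are all exactly what the paper does, with only a minor difference in how the cases in (b) are organised.
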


\begin{proof}
a) Since $[(z^*_i, 0)]_{i \in S \cap U}$, with $z^*_i$ the optimal solution of $(P_i)$, satisfies the constraints of problem $(P_S)$ and $c_i=0$ for any $i \in U$, we have $b_S \geq \sum_{i \in S \cap U} b_i$.
 
 \noindent b) Let $S,T \subset N$ such that $S \cap T = \emptyset$. If $a$ belongs to neither $S$ nor $T$, then $v_0(S) + v_0(T) = 0 = v_0(S \cup T)$. If $a \in S$, $|S \cap U| \geq 2$ and $a \notin T$, then we denote by $[ \, z^*_i \,,\, y^*_i \, ]_{i \in S \cap U}$ the optimal solution of $(P_S)$ and by $[ \, z^*_i \,,\, 0 \, ]$ the optimal solution of $(P_i)$ for all $i \in T$. The union of previous vectors is obviously a feasible solution for $(P_{S \cup T})$ since
 $$
\textstyle
 \sum_{i \in (S \cup T) \cap U} (y^*_i)^+ = \sum_{i \in S \cap U} (y^*_i)^+
 = \sum_{i \in S \cap U} (y^*_i)^-
 = \sum_{i \in (S \cup T) \cap U} (y^*_i)^-.
$$
Hence, 
\begin{align*}
\textstyle
 v_0(S \cup T) \geq & 
\textstyle
 \sum_{i \in (S \cup T) \cap U}[ f_i(z^*_i,y^*_i) - b_i ] 
	+ f_a \left( \sum_{i \in (S \cup T) \cap U} (y^*_i)^+ \right) 
 \\[3mm]
 = &
\textstyle
 \sum_{i \in S \cap U}[ f_i(z^*_i,y^*_i) - b_i ] + f_a \Big(\sum_{i \in S \cap U} (y^*_i)^+ \Big) +
 \sum_{i \in T}[ f_i(z^*_i,0) - b_i ] 
 \\[3mm]
 = & \, v_0(S) + 0 = v_0(S) + v_0(T). 
\end{align*}
\noindent c) From a) and b), $S \subseteq T \subseteq N$ yields $v_0(T) = v_0(S \cup (T \setminus S)) \geq v_0(S) + v_0(T \setminus S) \geq v_0(S)$.

\noindent d) Since $x(N) = v_0(N)$ and for any $S \subset N$ we have $x(S) = v_0(N) \geq v_0(S)$ if $a \in S$ and $x(S) =0 = v_0(S)$ otherwise. Thus, 
we can conclude that $x = (0,\dots,0,v_0(N)) \in C(N,v_0)$ and hence the game is balanced.
\end{proof}

The peculiar structure of ESG is not essential for the game to be balanced, indeed any monotonic veto game is balanced (see~\cite{Ba16}).
Moreover, any subgame is still a monotonic veto game and therefore balanced, so the ESG is totally balanced (see Corollary~\ref{cor:totbal}).
 
The allocation in Theorem~\ref{t:c=0properties}d) is extremely unfair and hardly of use in practice: $v_0$ measures the extra value that the coalition bring to the users as opposed to being alone, 
thus this allocation gives no user an incentive to either leave or join the grand coalition. 
In order for the grand coalition to form, users should obtain a positive benefit in the first place. 
Thus, an allocation in some  strong $\eps$-core with $\eps > 0$, possibly in the least core, should be selected.
The next theorem provides a characterization of the nonemptiness of strong $\eps$-cores and consequently the least core value.
Since the results in the theorem are largely common with the case of nonzero admission fees, their proof is delayed until the corresponding results in the next section.

\begin{theorem}
\label{t:c=0eps-core}
\ \begin{enumerate}[a)]
\item For any $\eps \geq 0$, the following statements are equivalent:
\begin{enumerate}[(i)]
\item $C_\eps(N,v_0) \neq \emptyset$

\item $(\eps,\dots,\eps,v_0(N)-\eps|U|) \in C_\eps(N,v_0)$

\item $\eps \leq \min \left\{ \,  (v_0(N)-v_0(S))/(|N|-|S \cap U|) \,:\, S \in \mathcal{P}^a \, \right\}$.
\end{enumerate}
\item The exact value of $\eps^*_0$ is
\begin{align}
\eps^*_0 = \min \big\{ \, ( \, v_0(N)-v_0(S) \, ) \,/\, 
( \, |N|-|S \cap U| \, ) \,:\, 
S \in \mathcal{P}^a \, \big\}.
\label{e:leastcore-exact}
\end{align}

\item The following lower and upper bounds for $\eps^*_0$ hold:
\begin{align}
\label{e:leastcore-lb}
& \eps^*_0 \geq \underline\eps_0 =
( \, v_0(N) - \max\{ \, v_0(S) \,:\, S \in \bar{\mathcal{P}}^a \, \} \, )
\,/\, |N|,
\\[3mm]
\label{e:leastcore-ub}
& \eps^*_0 \leq \bar\eps_0 =
\min \big\{ v_0(N)\,/\,|N| \,,\,
( \, v_0(N) - \max\{ \, v_0(S) \,:\, S \in \bar{\mathcal{P}}^a \, \} \, )
\,/\, 2 \big\},
\end{align}
where $\bar{\mathcal{P}}^a = \{ \, S \in \mathcal{P}^a \,:\, |S| = |N|-1 \, \}$. 
\end{enumerate}
\end{theorem}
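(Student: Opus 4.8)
The plan is to establish part~(a) through the cycle of implications (ii) $\Rightarrow$ (i) $\Rightarrow$ (iii) $\Rightarrow$ (ii), and then to read off (b) and (c) as essentially bookkeeping consequences. The implication (ii) $\Rightarrow$ (i) is trivial. For (i) $\Rightarrow$ (iii), the crucial point is that the veto character of $a$ makes every ``aggregator-free'' coalition worthless, so every $x \in C_\eps(N,v_0)$ must satisfy $x_i \geq \eps$ for each $i \in U$: apply the $\eps$-core inequality to the singleton $\{i\}$, which is a proper coalition with $v_0(\{i\}) = 0$. Fixing $S \in \mathcal{P}^a$, the complement $N \setminus S$ is a nonempty subset of $U$ of cardinality $|N| - |S \cap U| - 1$, so summing those inequalities gives $x(N \setminus S) \geq \eps(|N| - |S \cap U| - 1)$; combining this with efficiency $x(S) = v_0(N) - x(N \setminus S)$ and with $x(S) \geq v_0(S) + \eps$, and cancelling one $\eps$, yields $v_0(S) \leq v_0(N) - \eps(|N| - |S \cap U|)$, which is exactly the bound in (iii) since $|N| - |S \cap U| \geq 2$. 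For (iii) $\Rightarrow$ (ii) I would directly check that $x = (\eps, \dots, \eps, v_0(N) - \eps|U|)$ belongs to $C_\eps(N,v_0)$: it is efficient; for a proper $S$ with $a \notin S$ one has $v_0(S) = 0$ and $x(S) = \eps|S| \geq \eps$ (here $\eps \geq 0$ is used); and for $S \in \mathcal{P}^a$ the inequality $x(S) \geq v_0(S) + \eps$ rearranges exactly into the hypothesis of (iii).

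Part~(b) is then immediate: $\eps^*_0$ is by definition the largest $\eps \geq 0$ with $C_\eps(N,v_0) \neq \emptyset$, so by (i) $\Leftrightarrow$ (iii) it equals the minimum in \eqref{e:leastcore-exact}, and monotonicity of the game (Theorem~\ref{t:c=0properties}) gives $v_0(N) \geq v_0(S)$, so that minimum is nonnegative and attained. For part~(c), the lower bound follows by bounding each ratio in \eqref{e:leastcore-exact}: its denominator is at most $|N|$, while every $S \in \mathcal{P}^a$ is contained in some $N \setminus \{j\}$ with $j \in U$, whence monotonicity gives $v_0(S) \leq v_0(N \setminus \{j\}) \leq \max\{ v_0(T) : T \in \bar{\mathcal{P}}^a \}$, so the ratio is at least $\underline\eps_0$. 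The upper bound follows by evaluating that minimum at two particular coalitions: $S = \{a\}$, giving the ratio $v_0(N)/|N|$, and a maximizer $S^* \in \bar{\mathcal{P}}^a$ of $v_0$, for which $|N| - |S^* \cap U| = 2$ and the ratio equals $(v_0(N) - \max\{ v_0(S) : S \in \bar{\mathcal{P}}^a \})/2$; the minimum is no larger than the smaller of these, which is $\bar\eps_0$.

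The only step that is more than bookkeeping is (i) $\Rightarrow$ (iii): one must notice that the veto property allows replacing the single $\eps$-core inequality for $N \setminus S$ by the stronger estimate coming from the $|N| - |S \cap U| - 1$ singleton inequalities, and must track cardinalities carefully so that the denominator $|N| - |S \cap U|$ (and not $|N| - |S|$) appears. Everything else rests only on efficiency, monotonicity, and the vanishing of $v_0$ on the coalitions outside $\mathcal{P}^a_+$.
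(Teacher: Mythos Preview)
Your proof is correct and follows essentially the same route as the paper's (the paper defers the argument to the proof of Theorem~\ref{t:c>0eps-core}, which is exactly the cycle (ii) $\Rightarrow$ (i) $\Rightarrow$ (iii) $\Rightarrow$ (ii) you describe, using the singleton inequalities $x_i \geq \eps$ and the decomposition $x(N) = x(S) + x(N\setminus S)$ for $S \in \mathcal{P}^a$, and then reading off (b) and the bounds in (c) from the exact formula via monotonicity and the special coalitions $\{a\}$ and those in $\bar{\mathcal{P}}^a$). Your presentation of (i) $\Rightarrow$ (iii) makes the cardinality bookkeeping ($|N\setminus S| = |N| - |S\cap U| - 1$) slightly more explicit than the paper's, but the underlying argument is identical.
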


Notice that the monotonicity of the game and \eqref{e:leastcore-exact} imply that $\eps^*_0=0$ if and only if the marginal contribution $M^{v_{{}_0}}(i)=0$ for some $i \in U$. 
When $\eps=0$, $(iii)$ collapses to $v_0(S) \leq v_0(N)$ for any $S \in \mathcal{P}^a$ as already shown in~\cite{ArFe97} and \cite{PoMuTi90}.

\begin{remark}
\label{rem:bbg-clan-c0}
Consider SESG when it is a big-boss game.
Then, Proposition~\ref{prop:ex1-bbg-clan} c) or d) implies that the marginal contribution $M^{v_{{}_0}}(i)=0$ for any $i \in U_1$ or $i \in U_2$, so that $\eps^*_0 = 0$.

When SESG is a clan game with $U_1=\{u_1\}$ (the case with $U_2=\{u_2\}$ is symmetric), there are two cases: 
if $p_1 \geq \sum_{j \in U_2} q_j$, then $\eps^*_0 >0$ since all the marginal contributions are positive. Moreover, \eqref{e:leastcore-exact} provides
$
\eps^*_0 =
\min\left\{ \frac{\sum_{j \in U_2} q_j}{|U_2|+2} , \ 
\min\left\{
\frac{\sum_{j \in S} q_j}{|S|+1}: \ 
S \subset U_2, \
S \neq \emptyset
\right\}
\right\}.
$
If, in addition, $q_j=q$ for any $j \in U_2$, then
$\eps^*_0 = q/2$.
If $p_1 < \sum_{j \in U_2} q_j$, then either $\eps^*_0=0$ or $\eps^*_0>0$ can occur. 
Indeed, \eqref{e:leastcore-exact} reads
$
\eps^*_0 = 
\min\left\{
\frac{p_1}{|U_2|+2} , \ 
\min\left\{
\frac{\left( p_1 -\sum_{j \in S} q_j \right)_+}{|U_2|+1-|S|}
: \ S \subset U_2, \ S \neq \emptyset
\right\}
\right\},
$
where $(\cdot)_+$ denotes the positive part.
In addition, when $q_j=q$ for any $j \in U_2$, then 
$
\eps^*_0=
\left(
\frac{p_1 - q(|U_2|-1)}{2}
\right)_+.
$
\end{remark}

The Example below shows that the bounds~\eqref{e:leastcore-lb} and \eqref{e:leastcore-ub} are tight.

\begin{example}
\label{ex:c0-lbub-tight}
Consider SESG with $U_1=\{1,2\}$, $U_2=\{3,4\}$, $p_1=10$, $p_2=90$, $q_3=86$, $q_4=14$, $c_i=0$ for any $i \in U$.  
The non-zero values of coalitions are: 
$v_0(\{ a , 2 , 3 , 4 \})=90$;
$v_0(S) \leq 86$ if $a \in S$, $|S| = 4$ and $S \neq \{ a , 2 , 3 , 4 \}$;
$v_0(\{ a , 2 , 3 \})=86$;
$v_0(S) \leq 14$ if $a \in S$, $|S|= 3$ and $S \neq \{ a , 2 , 3 \}$,
while $v_0(N)=100$. 
We have $\eps^*_0 = 14/3$, while \eqref{e:leastcore-lb} gives $\underline\eps_0=2$ and ~\eqref{e:leastcore-ub} gives $\bar\eps_0 = 5$. 
If instead it were $p_2 =92$, then one would have $\eps^*_0 = \bar\eps_0 = 4$. The lower bound in~\eqref{e:leastcore-lb} is tight if and only if $\eps^*_0 = 0$, or equivalently there exists a user $i \in U$ such that $v_0(N) = v_0( N \setminus i )$. For instance, if it were $p_2=100$, then $v_0(N) = v_0(\{ a , 2 , 3 , 4\}) = 100$.
\end{example}

Notice that the upper and lower bounds~\eqref{e:leastcore-lb}--\eqref{e:leastcore-ub} require the computation of $|N|$ values of the characteristic function, i.e., the solution of an optimization problem of the form $(P_S)$ for each value. 
If these problems are linear programs, then the bounds can be computed in polynomial time (see Section~\ref{sec:algo}). 


\section{The least core for games with admission fees}
\label{sec:fees}

The case with (positive) admission fees $c_i$ is more complex. 
In fact, the properties enjoyed by the game with no fees do not hold in general (see Example~\ref{ex:c>0no_properties}) and the core might be empty if fees are not small enough (see Theorem~\ref{t:c>0core}).
Notice that the characteristic function can be also written as
\[
 v(S) =
 \begin{cases}
    v_0(S) - c(S) & \text{if } S \in \mathcal{P}^a_+,
    \\
    0    & \text{otherwise},
 \end{cases}
\]
where $v_0$ is the characteristic function of the game with no fees and $c(S) = \sum_{i \in S \cap U} c_i$.
The core of this game shares some similarities with the  weak $\eps$-core~\citep{ShSh66} of $(N,v_0)$: if all the costs $c_i$ are equal to some $c$ and $\eps=-c$, then the inequality constraints for the coalitions $S \in \mathcal{P}^a_+$ coincide (but not for the other coalitions), anyway the two sets have empty intersection because of the efficiency of allocations since $v(N)<v_0(N)$.



\begin{example}
\label{ex:c>0no_properties}
Consider SESG with $U_1=\{1,2\}$, $U_2=\{3,4\}$, $p_1=4$, $p_2=6$, $q_3=2$, $q_4=7$, $c_i = 1$ for all $i \in U$.
Then:
\begin{enumerate}[a)]
\item $v(S) < 0$ for some $S$: for instance, $v(\{ a , 1 , 2 \})=-2$,

\item the game is not superadditive: for instance, $v(\{ 3 \}) + v(\{ a , 1 , 4 \}) = 2 > 1 = v(\{ a , 1 , 3 , 4 \})$,

\item the game is not monotonic: for instance, $v(\{ a , 1 , 4 \}) = 2 > 1 = v(\{ a , 1 , 3 , 4 \})$,

\item the game is not convex: for instance, $S = \{ a , 1 , 3 , 4 \}$ and $T = \{ a , 2 , 3 , 4 \}$ give $       v(S \cup T) + v(S \cap T) 
       = 3 < 4 = v(S) + v(T)$.
\end{enumerate}
\end{example}

Monotonicity is related to the magnitude of admission fees. 
The following theorem provides a characterization through two conditions: the admission fee of each user must not exceed its marginal contribution to any coalition (when fees are not taken into account), and the total admission fee of the users in any coalition must not exceed  the extra value provided by the aggregator to the coalition.

\begin{theorem}
\label{t:mono}
The ESG is monotonic if and only if both the following conditions hold:
\begin{enumerate}[(i)]
 \item $c_i \leq  \min\{v_0(S\cup \{ i \}) - v_0(S) \,:\, S \in \mathcal{P}^a \;,\; i \notin S \}$
       for all $i \in U$

 \item $c(S) \leq v_0(S \cup \{ a \}) -v_0(S)$ for all $S \in \mathcal{P}$ such that $a \notin S$. 
\end{enumerate}
\end{theorem}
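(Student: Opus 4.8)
The plan is to prove the two directions separately, working directly from the definition of monotonicity and the explicit form of the characteristic function $v(S) = v_0(S) - c(S)$ for $S \in \mathcal{P}^a_+$ and $v(S) = 0$ otherwise. Recall that $(N,v)$ is monotonic iff $v(S \cup \{j\}) \geq v(S)$ for all $S \in \mathcal{P}$ and all $j \in N \setminus S$. Since the behaviour of $v$ changes depending on whether $a$ and how many users are in the coalition, I would organize the argument by cases according to the position of $S$ and $S \cup \{j\}$ relative to the families $\mathcal{P}^a_+$, $\mathcal{P}^a \setminus \mathcal{P}^a_+$, and coalitions not containing $a$.

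For the ``if'' direction, assume (i) and (ii) and fix $S \in \mathcal{P}$, $j \in N \setminus S$. If $S \cup \{j\} \notin \mathcal{P}^a_+$ then $v(S \cup \{j\}) = 0$; but then $S \notin \mathcal{P}^a_+$ either (adding a player cannot destroy membership in $\mathcal{P}^a_+$), so $v(S) = 0$ and the inequality is trivial. The substantive case is $S \cup \{j\} \in \mathcal{P}^a_+$. Here I split on the identity of $j$: \textbf{(a)} $j \in U$ and $S \in \mathcal{P}^a_+$ already: then $v(S \cup \{j\}) - v(S) = [v_0(S \cup \{j\}) - v_0(S)] - c_j \geq 0$ by (i). \textbf{(b)} $j = a$ (so $S \subseteq U$, $|S| \geq 2$, $a \notin S$): then $v(S) = 0$ and $v(S \cup \{a\}) = v_0(S \cup \{a\}) - c(S) \geq 0$ by (ii). \textbf{(c)} $j \in U$ but $S \notin \mathcal{P}^a_+$: then $S \cup \{j\} \in \mathcal{P}^a_+$ forces $|S \cap U| = 1$ and $a \in S$; so $v(S) = 0$ and I need $v_0(S \cup \{j\}) \geq c(S \cup \{j\}) = c_i + c_j$ where $\{i\} = S \cap U$. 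This last inequality needs a little care: it should follow by combining (i) applied to $j$ (bounding $c_j$ by $v_0(S \cup \{j\}) - v_0(S)$) with the observation that $v_0(S) = v_0(\{a,i\}) \geq c_i$, which in turn is (ii) applied to $\{i\} \in \mathcal{P}$ — note $v_0(\{a,i\}) - v_0(\{i\}) = v_0(\{a,i\}) - 0$ since $\{i\}\notin\mathcal P^a_+$ means $v_0(\{i\})=0$. Chaining these gives exactly what is needed.

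For the ``only if'' direction, assume monotonicity. To get (i), fix $i \in U$ and $S \in \mathcal{P}^a$ with $i \notin S$; I want $c_i \leq v_0(S \cup \{i\}) - v_0(S)$. If $S \cup \{i\} \in \mathcal{P}^a_+$, then one of $S, S\cup\{i\}$ might fail to lie in $\mathcal P^a_+$, so I'd split: when both are in $\mathcal P^a_+$, monotonicity gives $v_0(S\cup\{i\}) - c(S) - c_i = v(S\cup\{i\}) \geq v(S) = v_0(S) - c(S)$, i.e. the claim; when $S \notin \mathcal P^a_+$ (so $S = \{a\}$, and $S\cup\{i\}=\{a,i\}\notin\mathcal P^a_+$ too unless — actually $\{a,i\}$ has only one user so it is never in $\mathcal P^a_+$), I should instead pick a witness coalition containing one more user to activate the $\mathcal P^a_+$ regime, or argue the needed inequality is vacuous/trivial because $v_0(\{a\})=v_0(\{a,i\})=0$ makes it $c_i \le 0$ — this is the delicate point and I expect it is handled by choosing $S$ in the statement of (i) so the minimum is only over coalitions where the bound is non-vacuous, i.e. one must double-check the quantifier ``$S \in \mathcal{P}^a$'' really does yield the minimum claimed and that small coalitions do not produce a spuriously small (even negative) right-hand side. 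For (ii), fix $S \in \mathcal{P}$ with $a \notin S$; if $|S| \geq 2$ then $S \cup \{a\} \in \mathcal{P}^a_+$ and monotonicity gives $v_0(S \cup \{a\}) - c(S) = v(S \cup \{a\}) \geq v(S) = 0$, which is exactly (ii) since $v_0(S) = 0$; if $|S| = 1$, say $S = \{i\}$, then $S \cup \{a\} = \{a,i\} \notin \mathcal{P}^a_+$ so $v(S\cup\{a\}) = 0 = v(S)$ and monotonicity gives nothing directly — so again one must recover $c_i \leq v_0(\{a,i\})$ from monotonicity applied along a longer chain, e.g. compare $\{a,i\}$ with $\{a,i,k\}$ for a second user $k$, using already-proven (i) to strip off $c_k$.

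\textbf{Main obstacle.} The genuine difficulty is the boundary between coalitions in $\mathcal{P}^a_+$ and those with exactly one user, where $v$ is flat at $0$ and monotonicity gives no leverage locally; there one must propagate information through chains of coalitions, and in the ``only if'' direction must be careful that the minima in (i)–(ii) are taken over exactly the right index sets so that the recovered inequalities are the sharp ones (and not weakened or rendered vacuous by degenerate small coalitions). Everything else is a routine case check using $v(S) = v_0(S) - c(S)$ and the non-negativity/monotonicity of $v_0$ from Theorem~\ref{t:c=0properties}.
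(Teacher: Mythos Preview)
Your plan follows exactly the same case split as the paper: rewrite monotonicity as $v(S\cup\{j\})\ge v(S)$ for all $S\in\mathcal P$ and $j\in N\setminus S$, and split according to whether $j\in U$ (yielding~(i)) or $j=a$ (yielding~(ii)), with the subcase $a\notin S$ being trivial since both sides vanish. The paper's proof is, however, considerably terser than your outline: for $j\in U$ and $a\in S$ it writes the one-line chain
\[
v(S\cup\{j\})\ge v(S)\;\Longleftrightarrow\; v_0(S\cup\{j\})-c(S)-c_j\ge v_0(S)-c(S)\;\Longleftrightarrow\; c_j\le v_0(S\cup\{j\})-v_0(S)
\]
by substituting $v=v_0-c$ on both sides, and declares this equivalent to~(i); for $j=a$ it writes $v(S\cup\{a\})\ge v(S)=0\Leftrightarrow v_0(S\cup\{a\})\ge c(S)$, which is~(ii). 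It does not separate out your subcases~(b) and~(c), nor treat singleton~$S$ specially in deriving~(ii).

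In particular, the ``main obstacle'' you flag --- the boundary coalitions with $|S\cap U|\le 1$, where $v$ is identically~$0$, the substitution $v=v_0-c$ is not literally valid, and where taking $S=\{a\}$ in~(i) gives the bound $c_i\le v_0(\{a,i\})-v_0(\{a\})=0$ --- is not singled out in the paper at all; the displayed equivalence is applied uniformly over~$\mathcal P^a$. So your approach is the paper's, only with extra scrupulousness about boundary cases that the paper's own argument elides rather than resolves.
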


\begin{proof}
\noindent The monotonicity is equivalent to ask $v(S \cup \{ i \}) \geq v(S)$ for all $S \in \mathcal{P}$ and all $i \in N$. The two conditions correspond to the cases where, respectively, $i$ is a user or $i = a$. In the former, consider $S \in \mathcal{P}$ such that $i \notin S$. If $a \notin S$, then $v(S \cup \{ i \}) = 0 = v(S)$. If $a \in S$, then
\begin{align*}
 v(S \cup \{ i \}) \geq  v(S) \Longleftrightarrow & \; \textstyle
 v_0(S \cup \{ i \}) - c(S) - c_i \geq v_0(S) - c(S) \\
 \Longleftrightarrow & \; v_0(S \cup \{ i \}) - c_i \geq v_0(S)
 \Longleftrightarrow c_i \leq v_0(S \cup \{ i \}) - v_0(S)
\end{align*}
that is equivalent to $(i)$. The other case is that of $S \in \mathcal{P}$ and $a \notin S$ (thus $v(S) = 0$), where $v(S \cup \{ a \}) \geq v(S)$ is the same as $v_0(S \cup \{ a \}) \geq c(S)$.
\end{proof}

While monotonicity guarantees the nonemptiness of the core, the vice versa does not hold. In fact, balancedness can be characterized by a limited form of monotonicity (condition $(iii)$ below, see also~\cite{ArFe97}) or bounds on admission fees.

\begin{theorem}
\label{t:c>0core}
The following statements are equivalent:
\begin{enumerate}[(i)]
 \item $C(N,v) \neq \emptyset$
 
 \item $(0,\dots,0,v(N)) \in C(N,v)$

 \item $v(S) \leq v(N)$ for all $S \in \mathcal{P}^a$
 
 \item $c(U) - c(S) = c(U \setminus S) \leq
        v_0(N) - v_0(S)$ for all $S \in \mathcal{P}^a$.
\end{enumerate}
\end{theorem}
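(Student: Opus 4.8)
The plan is to prove the chain of equivalences $(ii)\Rightarrow(i)\Rightarrow(iii)\Rightarrow(iv)\Rightarrow(ii)$, which keeps every implication short. The implication $(ii)\Rightarrow(i)$ is trivial. For $(i)\Rightarrow(iii)$: if $x\in C(N,v)$, then for any $S\in\mathcal{P}^a$ the core inequality gives $x(S)\ge v(S)$, and combining with efficiency $x(N)=v(N)$ and the fact that $x(N\setminus S)\ge v(N\setminus S)\ge 0$ (note $N\setminus S\subseteq U$, so $v(N\setminus S)=0$ by the veto property, since $a\notin N\setminus S$) yields $v(S)\le x(S)=v(N)-x(N\setminus S)\le v(N)$.

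For $(iii)\Leftrightarrow(iv)$: this is just rewriting. For $S\in\mathcal{P}^a$ with $|S\cap U|\ge 2$, i.e.\ $S\in\mathcal{P}^a_+$, we have $v(S)=v_0(S)-c(S)$, while $v(N)=v_0(N)-c(U)$ (the grand coalition is in $\mathcal{P}^a_+$ as long as $|U|\ge 2$, which is implicit in the model; the degenerate cases should be dispatched in a line). Thus $v(S)\le v(N)$ reads $v_0(S)-c(S)\le v_0(N)-c(U)$, i.e.\ $c(U)-c(S)\le v_0(N)-v_0(S)$, and $c(U)-c(S)=c(U\setminus S)$ since $S\cap U$ and $U\setminus S$ partition $U$. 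For the remaining $S\in\mathcal{P}^a\setminus\mathcal{P}^a_+$, i.e.\ $|S\cap U|\le 1$, one has $v(S)=0$, so $(iii)$ is automatic for such $S$; correspondingly $(iv)$ for such $S$ reads $c(U\setminus S)\le v_0(N)-v_0(S)$, which holds because $v_0$ is monotonic (Theorem~\ref{t:c=0properties}c) so $v_0(N)-v_0(S)\ge v_0(S\cup(U\setminus S))-v_0(S)\ge$\dots\ — more directly, superadditivity of $v_0$ gives $v_0(N)\ge v_0(S)+v_0(N\setminus S)$, but we need a bound by $c(U\setminus S)$, so the cleanest route is to observe these $S$ are handled together with the general argument or simply to note that $(iii)$ is stated over all of $\mathcal{P}^a$ and the two conditions were defined to match on $\mathcal{P}^a_+$ while being harmless elsewhere — I will phrase $(iv)$'s scope carefully so the equivalence is clean, matching exactly what the theorem states.

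Finally $(iv)\Rightarrow(ii)$: set $x=(0,\dots,0,v(N))$, which is efficient by construction. For $S\subseteq N$ with $a\notin S$ we have $x(S)=0=v(S)$ by the veto property. For $S\in\mathcal{P}^a$ we have $x(S)=v(N)$, and $v(N)\ge v(S)$ is exactly the reformulation of $(iv)$ worked out in the previous paragraph; hence $x\in C(N,v)$. The only genuine subtlety — and the step I expect to require the most care — is the bookkeeping between $\mathcal{P}^a$, $\mathcal{P}^a_+$ and the coalitions with at most one user: making sure that $(iv)$ as stated (over all $S\in\mathcal{P}^a$) is genuinely equivalent to $(iii)$ and not just to its restriction to $\mathcal{P}^a_+$, which amounts to checking that for $|S\cap U|\le 1$ the inequality $c(U\setminus S)\le v_0(N)-v_0(S)$ is implied by its instances on $\mathcal{P}^a_+$ (via monotonicity of $v_0$ and nonnegativity, adding one user at a time). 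Everything else is routine substitution using $v(S)=v_0(S)-c(S)$ on $\mathcal{P}^a_+$.
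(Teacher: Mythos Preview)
Your cycle $(ii)\Rightarrow(i)\Rightarrow(iii)\Rightarrow(iv)\Rightarrow(ii)$ is exactly the paper's route, and the arguments for $(ii)\Rightarrow(i)$, $(i)\Rightarrow(iii)$ and $(iv)\Rightarrow(ii)$ are essentially identical to the paper's (the paper uses individual rationality $x_i\ge v(\{i\})=0$ instead of your $x(N\setminus S)\ge v(N\setminus S)=0$, which is the same idea).

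The one place your proposal gets tangled is the small-coalition bookkeeping in $(iii)\Rightarrow(iv)$. Your claim that ``$(iii)$ is automatic for such $S$'' is not quite right: for $S=\{a\}$ the statement $(iii)$ reads $v(N)\ge 0$, which is a genuine constraint, not automatic. The clean way---and what the paper does---is to use $(iii)$ as a hypothesis rather than try to argue both sides are vacuous. From $(iii)$ with $S=\{a\}$ you get $v(N)\ge 0$, i.e.\ $v_0(N)\ge c(U)$. Then for any $S\in\mathcal{P}^a$ with $|S\cap U|\le 1$ you have $v_0(S)=0$ and $c(U\setminus S)\le c(U)\le v_0(N)=v_0(N)-v_0(S)$, which is $(iv)$. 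No monotonicity of $v_0$, no ``adding one user at a time''---just nonnegativity of the $c_i$. Once you make this one paragraph crisp, your proof matches the paper's line for line.
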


\begin{proof}
\hfill \\
$(ii) \Longrightarrow (i)$ is obvious.

\noindent
$(i) \Longrightarrow (iii)$. Assume that there exists $x \in C(N,v)$: for all $i \in N$ we have $x_i \geq v( \{ i \} ) = 0$. 
Thus, for any $S \in \mathcal{P}^a$, $v(N) = x(N) 
\geq x(S) \geq v(S)$.

\noindent
$(iii) \Longrightarrow (iv)$. Take any $S \in \mathcal{P}^a$. If $|S \cap U|\geq 2$, then $v_0(N) - c(U) = v(N) \geq v(S) = v_0(S) - c(S)$ as required. If, instead, $|S \cap U| \leq 1$, then $v_0(N) - c(U) = v(N) \geq v(S) = 0 = v_0(S)$. Thus, $v_0(N) \geq v_0(S) + c(U) \geq v_0(S) + c(U \setminus S)$ since $U \setminus S \subseteq U$ and $c_i \geq 0$.

\noindent
$(iv) \Longrightarrow (ii)$. With $\bar{x}=(0,\dots,0,v(N))$, $\bar{x}(N) = v(N)$ and $\bar{x}(S) = 0 = v(S)$ for all $S \subset N$ such that $a \notin S$. If, rather, $S \subset N$ and $a \in S$, then $\bar{x}(S) = v(N) = v_0(N) - c(U) \geq 
 v_0(S) + c(U \setminus S) - c(U) =
 v_0(S) - c(S) = v(S)$.
Therefore, $\bar{x} \in C(N,v)$.
\end{proof}

\begin{remark}
Denoting the maximum admission fee by $\bar{c}$, the inequality
\[
 \bar{c} \leq (v_0(N)-v_0(S)) / (|N|-|S \cap U| - 2)
 \qquad \forall\ S \in \mathcal{P}^a
\]
implies condition $(iv)$ in~\autoref{t:c>0core} and therefore the nonemptiness of the core since $c(U \setminus S) \leq (|U|-|S|) \bar{c} = (|N|-|S \cap U| - 2)\bar{c}$.
The above bound on $\bar{c}$ is slightly larger than $\eps^*_0$ from~\eqref{e:leastcore-exact},
which shows that the maximum admission fee is tightly related to the least core value for the case with no admission fees.
\end{remark}

Notice that condition $(i)$ from~\autoref{t:mono} implies condition $(iv)$ in \autoref{t:c>0core} and hence it is sufficient for balancedness.
Monotonicity is stronger than $(i)$, indeed it guarantees the nonemptiness of the core of any subgame.

\begin{corollary}
\label{cor:totbal}
$(N,v)$ is totally balanced if and only if it is monotonic.
\end{corollary}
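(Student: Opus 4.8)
The plan is to prove the two implications separately, each time reducing to Theorem~\ref{t:c>0core} via the following observation: restricting $v$ to the subsets of any coalition $B$ with $a\in B$ produces exactly the characteristic function of an ESG --- the one whose user set is $B\cap U$, with the unchanged data $\{D_i,f_i,c_i\}_{i\in B\cap U}$ and $f_a$ (the problems $(P_i)$ and $(P_S)$ involve only the users appearing in them). Thus Theorem~\ref{t:c>0core} applies to every subgame $(B,v)$ with $a\in B$, and one may moreover discard at the outset the degenerate case $|U|\le 1$, where $v\equiv 0$ and both properties hold trivially. The implication ``monotonic $\Rightarrow$ totally balanced'' --- essentially the remark preceding the statement --- then follows because monotonicity is inherited by every subgame: if $a\notin B$ the subgame is identically zero, hence balanced; if $a\in B$, monotonicity of $(B,v)$ gives $v(S)\le v(B)$ for every coalition $S\subsetneq B$ with $a\in S$, which is exactly condition $(iii)$ of Theorem~\ref{t:c>0core} for the subgame, whence $C(B,v)\neq\emptyset$.

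For the converse, assume $(N,v)$ is totally balanced; I would prove $v(A)\le v(B)$ for all $A\subseteq B\subseteq N$ by a short case analysis on $B$. If $a\notin B$ or $|B\cap U|\le 1$, then $v(A)=v(B)=0$, since $v$ vanishes on every coalition lacking $a$ or containing at most one user. Otherwise $a\in B$ and $|B\cap U|\ge 2$: the subgame $(B,v)$ is balanced, so Theorem~\ref{t:c>0core} applied to it gives $v(T)\le v(B)$ for every coalition $T\subsetneq B$ with $a\in T$, and in particular $v(B)\ge v(\{a\})=0$ on choosing $T=\{a\}$. Comparing $A$ with $B$: if $A=B$ there is nothing to prove; if $a\notin A\subsetneq B$ then $v(A)=0\le v(B)$; and if $a\in A\subsetneq B$ then $A$ is one of the coalitions $T$ above, so $v(A)\le v(B)$. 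All cases being covered, $(N,v)$ is monotonic.

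The argument presents no genuine obstacle; the only points needing care are the bookkeeping ones already flagged --- verifying that a subgame obtained by restricting to a coalition containing $a$ is again an ESG (so that Theorem~\ref{t:c>0core} is applicable), and isolating the coalitions $B$ with $|B\cap U|\le 1$, on which $v$ vanishes so that Theorem~\ref{t:c>0core} gives nothing directly. The crux is simply that in a balanced ESG every coalition is dominated in worth by the grand coalition, which --- once the trivially-valued coalitions are set aside --- is precisely monotonicity.
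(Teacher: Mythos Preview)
Your proof is correct and follows essentially the same route as the paper: both directions reduce to condition~$(iii)$ of Theorem~\ref{t:c>0core} applied to each subgame, together with the observation that coalitions not containing~$a$ have zero worth. You add a point the paper leaves implicit---that each subgame containing~$a$ is itself an ESG, so Theorem~\ref{t:c>0core} legitimately applies---and treat the degenerate coalitions with $|B\cap U|\le 1$ more explicitly, but the structure of the argument is the same.
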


\begin{proof}
If $(N,v)$ is monotonic, then condition $(iii)$ in \autoref{t:c>0core} holds for any subgame and thus the game is totally balanced.
Vice versa, consider that any subgame $(T,v)$ is balanced. If $a \in T$, then \autoref{t:c>0core} $(iii)$ implies $v(S) \leq v(T)$ for any $S \in \mathcal{P}^a$ with $S \subset T$. 
In particular, it holds $v(T) \geq v(\{a\})=0$ and therefore $v(T) \geq v(S)=0$ for any $S \notin \mathcal{P}^a$ with $S \subset T$.
Finally, if $a \notin T$ then $v(S)=v(T)=0$ for any $S \subseteq T$. Therefore, the game is monotonic. 
\end{proof}

Notice that the ESG in Example~\ref{ex:c>0no_properties} is not totally balanced but it is balanced (it is easy to check that the restricted monotonicity condition $(iii)$ in~\autoref{t:c>0core} holds).

Just like in the case with no fees (see~\autoref{t:c=0eps-core}), the extremely unfair allocation where the aggregator takes all characterizes the balancedness of the core.
On the contrary, the opposite extreme solution where the users take all does not necessarily belong to the core. 
The egalitarian allocation~\citep{Br07} and the center of gravity~\citep{DrFu91} are built to be fairer and they coincide in our framework by providing the aggregator and the users with an even share of the overall benefit.
Proposition~\ref{prop:extsol} provides necessary and sufficient conditions for them to belong to the core.

\begin{proposition}
\label{prop:extsol}
The following statements hold:
\begin{enumerate}[a)]

\item $x^{ua}=(v(N)/|U|,\dots,v(N)/|U|,0) \in C(N,v)$ if and only if 
\begin{align}\label{e:egalU}
\text{$v(N) \geq 0$ \quad and \quad $\max\{ v(S)/|S \cap U|:\ S \in \mathcal{P}^a_+ \} \leq v(N)/|U|$.}
\end{align}

\item $x^{eg}=(v(N)/|N|,\dots,v(N)/|N|) \in C(N,v)$ if and only if 
\begin{align}\label{e:egalN}
\text{$v(N) \geq 0$ 
\quad
and 
\quad
$\max\{ v(S)/|S|:\ S \in \mathcal{P}^a_+ \} \leq v(N)/|N|$.}
\end{align}

\item If $x^{ua} \in C(N,v)$, then $x^{eg} \in C(N,v)$.

\end{enumerate}
\end{proposition}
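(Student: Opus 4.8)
The plan is to verify each of the three assertions directly against the definition $C(N,v)=\{x:x(N)=v(N),\ x(S)\ge v(S)\ \forall\, S\in\mathcal{P}\}$, using throughout that a proper coalition $S$ has $v(S)=0$ unless $S\in\mathcal{P}^a_+$. For part a), since the aggregator receives $0$ in $x^{ua}$, one has $x^{ua}(S)=|S\cap U|\,v(N)/|U|$ for every $S\subseteq N$; in particular $x^{ua}(N)=v(N)$, so efficiency is automatic and $x^{ua}\in C(N,v)$ if and only if $|S\cap U|\,v(N)/|U|\ge v(S)$ for all $S\in\mathcal{P}$. I would then split the proper coalitions in two: for $S\notin\mathcal{P}^a_+$ the inequality becomes $|S\cap U|\,v(N)\ge 0$, which (testing it on a singleton $\{i\}$ with $i\in U$, and noting it holds trivially when $S\cap U=\emptyset$) is equivalent to $v(N)\ge 0$; for $S\in\mathcal{P}^a_+$, dividing by $|S\cap U|\ge 2$, it becomes $v(S)/|S\cap U|\le v(N)/|U|$. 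Conjoining these two conditions yields precisely \eqref{e:egalU}.

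For part b) I would run the same argument with $x^{eg}(S)=|S|\,v(N)/|N|$ for every $S\subseteq N$ (the aggregator now also gets $v(N)/|N|$): efficiency is again automatic, and core membership is equivalent to $|S|\,v(N)/|N|\ge v(S)$ for all $S\in\mathcal{P}$, which separates into $v(N)\ge 0$ (from the coalitions outside $\mathcal{P}^a_+$) and $v(S)/|S|\le v(N)/|N|$ (from those in $\mathcal{P}^a_+$), that is, \eqref{e:egalN}.

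For part c) I would avoid comparing \eqref{e:egalU} and \eqref{e:egalN} term by term and instead invoke convexity of the core: if $x^{ua}\in C(N,v)$ then $C(N,v)\neq\emptyset$, hence $(0,\dots,0,v(N))\in C(N,v)$ by Theorem~\ref{t:c>0core}; since $|N|=|U|+1$, one verifies componentwise that
\[
x^{eg}=\frac{|U|}{|N|}\,x^{ua}+\frac{1}{|N|}\,(0,\dots,0,v(N)),
\]
a convex combination of two elements of the convex set $C(N,v)$, and therefore $x^{eg}\in C(N,v)$.

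I do not expect a genuine obstacle here; the only point calling for a little care is part c), where the clean route is to recognize $x^{eg}$ as the convex combination above of $x^{ua}$ and the ``aggregator-takes-all'' allocation and then use Theorem~\ref{t:c>0core} together with convexity of the core, rather than manipulating the two max-inequalities directly. For the record, that direct manipulation is also short: for $S\in\mathcal{P}^a_+$ with $v(S)\le 0$ one has $v(S)/|S|\le 0\le v(N)/|N|$ because $v(N)\ge 0$, while if $v(S)>0$ then $v(S)\le v(N)$ by part a), and $v(S)/|S|\le v(N)/|N|$ follows by adding $|U|\,v(S)\le|S\cap U|\,v(N)$ to $v(S)\le v(N)$ and using $|S|=|S\cap U|+1$, $|N|=|U|+1$.
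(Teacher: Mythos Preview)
Your arguments for a) and b) are correct and coincide with the paper's own proof: compute $x^{ua}(S)=|S\cap U|\,v(N)/|U|$ (respectively $x^{eg}(S)=|S|\,v(N)/|N|$), note that efficiency is automatic, and split the core constraints according to whether $S\in\mathcal{P}^a_+$ or not.

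For part c) your primary argument is genuinely different from the paper's. The paper works directly with condition~\eqref{e:egalU} and proves~\eqref{e:egalN} by the chain
\[
\frac{v(S)}{|S|}=\frac{v(S)}{|S\cap U|}\cdot\frac{|S|-1}{|S|}
\leq\frac{v(N)}{|U|}\cdot\frac{|S|-1}{|S|}
=\frac{v(N)}{|N|}\cdot\frac{|N|}{|N|-1}\cdot\frac{|S|-1}{|S|}
\leq\frac{v(N)}{|N|},
\]
using that $t\mapsto(t-1)/t$ is increasing and that $v(N)\ge 0$. Your route instead recognises $x^{eg}$ as the convex combination $\tfrac{|U|}{|N|}x^{ua}+\tfrac{1}{|N|}(0,\dots,0,v(N))$ and invokes Theorem~\ref{t:c>0core} together with convexity of the core. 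This is shorter and more conceptual: it bypasses the inequality manipulation entirely and makes transparent why $x^{eg}$ inherits core membership. The paper's computation, on the other hand, is self-contained in that it does not appeal to Theorem~\ref{t:c>0core}. Your ``for the record'' alternative is also correct and is essentially a reorganisation of the paper's chain.
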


\begin{proof}
\hfill \\
a) Clearly, $x^{ua}$ satisfies $x^{ua}(N) = |U| ( \, v(N) \,/\, |U| \, ) = v(N)$. 
Since $x^{ua}(S) = |S \cap U| \, v(N) / |U|$ and $v(S)=0$ for any $S \notin \mathcal{P}^a_+$, the equivalence follows from the definition of the core.

\noindent b) It is analogous to a).

\noindent c) If $x^{ua} \in C(N,v)$, then~\eqref{e:egalU} holds for a).
For any $S \in \mathcal{P}^a_+$ we have
\begin{align*}
\frac{v(S)}{|S|} 
& =
\frac{v(S)}{|S \cap U|} \, \frac{|S \cap U|}{|S|} 
=  
\frac{v(S)}{|S \cap U|} \, \frac{|S|-1}{|S|}
\leq
\frac{v(N)}{|U|} \, \frac{|S|-1}{|S|}
\\
& = \frac{v(N)}{|N|-1} \, \frac{|S|-1}{|S|}
=
\frac{v(N)}{|N|} \, \frac{|N|}{|N|-1} \, \frac{|S|-1}{|S|} 
\leq 
\frac{v(N)}{|N|} \, \frac{|N|}{|N|-1} \, \frac{|N|-1}{|N|} 
=
\frac{v(N)}{|N|},
\end{align*}
where the first inequality follows from~\eqref{e:egalU} and the second one holds since the function $f(t)=(t-1)/t$ is increasing for $t>0$. Hence, condition~\eqref{e:egalN} holds, i.e., $x^{eg} \in C(N,v)$. 
\end{proof}

Notice that \eqref{e:egalU} and \eqref{e:egalN} cannot hold if some user has a zero marginal contribution, which is always the case in simple ESGs that are big-boss games.
As expected, both conditions hold for simple ESGs where the number of producers and consumers is the same and their individual production and consumption are all equal.
Conditions~\eqref{e:egalU} and \eqref{e:egalN} are not equivalent as the following example shows.

\begin{example}
Consider SESG with $U_1=\{1,2\}$, $U_2=\{3,4\}$, $p_1=4$, $p_2=7$, $q_3=4$, $q_4=6$, $c_i=0$ for any $i \in U$.
The coalition $\{a,2,4\}$ shows that \eqref{e:egalU} does not hold since $v(S)/|S \cap U|=3>5/2=v(N)/|U|$. On the contrary, \eqref{e:egalN} holds since $v(N)/|N|=2$ and $v(S) \leq 6$ and $|S| \geq 3$ for any $S \in \mathcal{P}^a_+$.
\end{example}


\subsection{The least core in the balanced case}

Supposing that the core is nonempty, characterizations of the nonemptiness of strong $\eps$-cores can be obtained also with the presence of admission fees.
A formula for the least core value and computable upper and lower bounds follow as well.


\begin{theorem}
\label{t:c>0eps-core}
\begin{enumerate}[a)]
\item For any $\eps \geq 0$ the following statements are equivalent:
\begin{enumerate}[(i)]
 \item $C_\eps(N,v) \neq \emptyset$
 
 \item $(\eps,\dots,\eps,v(N) - \eps|U|) \in C_\eps(N,v)$

\item $\eps \leq \hat{\eps}$
\end{enumerate}

\noindent where $\hat\eps :=
\min \left\{ [v(N)-v(S)]/(|N|-|S \cap U|):\ S \in \mathcal{P}^a
\right\}$.

\item If $C(N,v) \neq \emptyset$, then the least core value is $\eps^*=\hat\eps$.

\item If $C(N,v) \neq \emptyset$, then $\eps^*_0 - \bar{c} < \eps^* \leq \eps^*_0$ (with $\bar{c} = \max_{i \in U} c_i$).
\item If $(N,v)$ is monotonic, then
      \begin{equation}
       \label{e:c>0-lbmono}
       \eps^* \geq \big( \, v(N) - \max \{ v(S):\ S \in \bar{\mathcal{P}}^a \, \} \, \big) \,/\, |N|
       \; .
      \end{equation}
\item If $C(N,v) \neq \emptyset$, then the following upper bound for $\eps^*$ holds:
\begin{equation}
\label{e:c>0-ub}
\eps^* \leq 
\min \left\{ \eps^*_0 \;,\;
\frac{v(N)}{|N|} \,,\
\frac{v(N) - \max\{v(S):\ S \in \bar{\mathcal{P}}^a \}}{2} 
\right\}.
\end{equation}
\end{enumerate}
\end{theorem}
\begin{proof}
\begin{enumerate}[a)]
\item $(ii) \Longrightarrow (i)$ is obvious.

\noindent
 $(i) \Longrightarrow (iii)$. Assume  there exists $x \in C_\eps(N,v)$: by definition, $x(N) = v(N)$ and $x(S) \geq v(S) + \eps$ holds for any $S \subset N$. Thus, we have $x_i \geq \eps$ for all $i \in N$. If $S \in \mathcal{P}^a$, then 
\begin{align*}
 v(N) = & \, 
 x(N) = x(S) + x(N \setminus S) \geq
 x(S) + \eps |N \setminus S| \geq 
 v(S) + \eps (1 + |N \setminus S|) \\
 = & \, 
 v(S) + \eps (1 + |N| - |S|) =
 v(S) + \eps (|N| - |S \cap U|)
\end{align*}
hence, $\eps \leq \hat\eps$ holds.

\noindent
$(iii) \Longrightarrow (ii)$: for $\bar{x} = (\eps,\dots,\eps,v(N)-\eps|U|)$, $\bar{x}(N)=v(N)$ by definition. For $S \subset N$ such that $a \notin S$ (hence $v(S) = 0$) one has $\bar{x}(S) = \eps |S| = v(S) + \eps |S| \geq v(S) + \eps$. If, rather, $S \subset N$ and $a \in S$, then
\begin{align*}
 \bar{x}(S) = & \,
 \eps |S \cap U| + v(N) - \eps |U| = v(N) - \eps (|U|-|S \cap U|) \\
 \geq & \,
 v(S) + \eps (|N|-|S \cap U|) - \eps (|U|-|S \cap U|) \\
 = & \,
 v(S) + \eps (1 + |U|-|S \cap U|) - \eps (|U|-|S \cap U|) = v(S) + \eps,
\end{align*}
where the inequality follows from assumption $(iii)$. Hence, $\bar{x} \in C_\eps(N,v)$.
\item It follows from $a)$.
\item From b) and \eqref{e:leastcore-exact} we have
\begin{align*}
\eps^*_0 - \bar{c} & = \min \left\{ \frac{v_0(N)-v_0(S)}{|N|-|S \cap U|}
              \,:\, S \in \mathcal{P}^a
	   \right\}
 - \bar{c}
 \\
 & <  \min \left\{ \frac{v_0(N)-v_0(S) - |N \setminus S| \bar{c}}{|N|-|S \cap U|}
              \,:\, S \in \mathcal{P}^a
	   \right\}
\\
& \leq
\min \left\{ \frac{v_0(N) - v_0(S) - c(N \setminus S)}{|N|-|S \cap U|}
\,:\, S \in \mathcal{P}^a
\right\} = \eps^*
\\
& \leq
\min \left\{ \frac{v_0(N)-v_0(S)}{|N|-|S \cap U|} \,:\, S \in \mathcal{P}^a
\right\}
=
\eps^*_0.
\end{align*}

\item The least core value in b) and monotonicity imply
\begin{align*}
\eps^*  
    & \geq
    \min \left\{ \frac{v(N)-v(S)}{|N|}:\ S \in \mathcal{P}^a \right\}
	\\
	& 
	= 
	\frac{v(N) - \max \left\{ v(S) \,:\, S \in \mathcal{P}^a \right\}}{|N|}
= \frac{v(N) - \max \left\{ v(S):\ S \in \bar{\mathcal{P}}^a \right\}}{|N|}.
\end{align*}

\item $\eps^* \leq \eps^*_0$ follows from c). 
For the other two terms in the minimum, the least core value in b) guarantees:
$\eps^* 
\leq
\frac{v(N)-v(\{ e \})}{|N|-|\{ e \} \cap U|} = \frac{v(N)}{|N|}$
and
\begin{align*}
\eps^* 
\leq
\min \left\{ \frac{v(N)-v(S)}{2}
\,:\, S \in \bar{\mathcal{P}}^a
\right\} 
=
\frac{v(N) - \max \{ v(S):\ S \in \bar{\mathcal{P}}^a \}}{2}.
\end{align*}
\end{enumerate}
\end{proof}

The only relevant difference with the case of no admission fees (see Theorem~\ref{t:c=0eps-core}) lies in the upper bounds~\eqref{e:leastcore-ub} and \eqref{e:c>0-ub} since the latter involves the least core value $\eps^*_0$. 
Anyway, a looser bound can be achieved by removing $\eps^*_0$ from \eqref{e:c>0-ub} that, on the other hand, makes its computation  affordable.
The Example below shows that the bounds~\eqref{e:c>0-lbmono} and \eqref{e:c>0-ub} are tight.

\begin{example}
\label{ex:c-lbub-tight}
With the data of Example~\ref{ex:c0-lbub-tight} and $c_i = 1$ for all $i \in U$, we have: 
$v(\{ a , 2 , 3 , 4 \})=87$; 
$v(S) \leq 83$ if $a \in S$, $|S| = 4$ and $S \neq \{ a , 2 , 3 ,4 \}$;
$v(\{ a , 2 , 3 \})=84$;
$v(S) \leq 12$ if $a \in S$, $|S| = 3$ and $S \neq \{ a , 2 , 3 \}$;
$v(N) = 96$. 
Since $v(S) \leq v(N)$ for all $S \in \mathcal{P}^a$, we have $C(N,v) \neq \emptyset$. 
The exact value is $\eps^* = 4$,  the upper bound in~\eqref{e:c>0-ub} is 9/2, the lower bound~\eqref{e:c>0-lbmono} is 9/5, while $\eps^*_0=14/3$. 
In the same example, if it were $p_2=91$ then the upper bound in~\eqref{e:c>0-ub} would be equal to $\eps^* = 4$. 
The lower bound in~\eqref{e:c>0-lbmono} is tight if and only if the exact value is $\eps^* = 0$, or equivalently there exists a user $i$ such that $v(N) = v(N \setminus i)$. 
For instance, if it were $p_2=99$, then it would be $v(N) = v(\{ a , 2 , 3 , 4\}) = 96$.
\end{example}

\begin{remark}
Consider SESG when it is a clan game with $U_1=\{u_1\}$ (the case with $U_2=\{u_2\}$ is symmetric).
Hence, the monotonicity of the game requires that costs satisfy~\eqref{e:ex1-mono}.
Two cases occur: 
If $p_1 \geq \sum_{j \in U_2} q_j$, then $\eps^* >0$ and~\autoref{t:c>0eps-core} provides
$$
\eps^* =
\min\left\{ \frac{\sum_{j \in U_2} q_j - c_1}{|U_2|+2} , \ 
\min\left\{
\frac{\sum_{j \in S} q_j}{|S|+1}: \ 
S \subset U_2, \
S \neq \emptyset
\right\}
\right\}.
$$
If, in addition, $q_j=q$ for any $j \in U_2$, then
\begin{align}
\label{e:clan1-eps*}
    \eps^* = 
    \min\left\{ \frac{q|U_2|-c_1}{|U_2|+2} , \, \frac{q}{2} \right\}.
\end{align}
If $p_1 < \sum_{j \in U_2} q_j$, then either $\eps^*=0$ or $\eps^*>0$ can occur. 
Indeed,~\autoref{t:c>0eps-core} provides
$$
\eps^* = 
\min\left\{
\frac{p_1-c_1}{|U_2|+2} , \ 
\min\left\{
\frac{\left( p_1 -\sum_{j \in S} q_j \right)_+}{|U_2|+1-|S|}
: \ S \subset U_2, \ S \neq \emptyset
\right\}
\right\}.
$$
In addition, when $q_j=q$ for any $j \in U_2$, then $\eps^*=0$ if and only if $p_1/q \leq |U_2|-1$, otherwise
\begin{align}
\label{e:clan2-eps*}
\eps^*=
\begin{cases}
(p_1-c_1)/4
& \text{if $|U_2|=2$, $c_1 \geq 2q-p_1$},
\\
(p_1-c_1)/5
& \text{if $|U_2|=3$, $c_1 \geq 5q-3p_1/2$},
\\
[p_1-q(|U_2|-1)]/2
& \text{otherwise}.
\end{cases}
\end{align}
\end{remark}

\subsection{The least core in the unbalanced case}
\label{s:eps*unbal}

In this section we focus on the case where the admission fees are large, so that the core $C(N,v)$ is empty and hence $\eps^* < 0$ (see~\autoref{t:c>0core}) though $\eps^*_0 \geq 0$ always holds.
In this case the equivalences provided by~\autoref{t:c>0eps-core} a) do not hold since the allocation
$(\eps,\dots,\eps,v(N)-\eps|U|)$ never belongs to $C_\eps(N,v)$ whenever $\eps<0$.
Anyway, the least core value satisfies the inequality $\eps^* \leq
\hat{\eps}$
since the inequality $(iii)$ in~\autoref{t:c>0eps-core} a) holds whenever the $\eps$-strong core is nonempty regardless of the sign of $\eps$.
However, it is no longer necessarily an equality and indeed a better upper bound holds. 


\begin{proposition}
\label{prop:core-empty-ub}
Suppose $C(N,v) = \emptyset$. Then,
\begin{enumerate}[a)]
\item $2 \, \tilde{\eps} <
\eps^*
\leq 
\tilde{\eps}$ with $\tilde{\eps} := 
(v(N) - \max\{v(S):\ S \in \mathcal{P}^a \})/2 < 0$.

\item \inlineequation[e:leastcore-ub2]{
\eps^* \leq 
\bar{\eps} :=
\min \left\{ v(N)/|N| \,,\,
(v(N) - \max\{ \, v(S) \,:\, S \in \bar{\mathcal{P}}^a \, \})/2 \right\}.
}
\end{enumerate}
\end{proposition}

\begin{proof} 
a) Select any $x \in C_{\eps^*} (N,v)$, and $S \in \mathcal{P}^a$: we have $x(S) \geq v(S) + \eps^*$, and $x(N \setminus S) \geq v(N \setminus S) + \eps^* = \eps^*$ (since $a \notin N \setminus S$ and therefore $v(N \setminus S) = 0$). Hence
$$
 v(N) = x(N) = x(S) + x(N \setminus S) \geq v(S) + 2\eps^*.
$$
Thus, $\eps^* \leq ( \, v(N) - v(S) \, ) \,/\, 2$, and since this holds for any $S \in \mathcal{P}^a$ it holds in particular for the one giving the minimum, i.e., $\tilde{\eps}$. In order to prove that $\eps^* > 2\,\tilde{\eps}$, let $\delta \in ( \, 0 \,,\, -2\,\tilde{\eps} \,/\, |N| \, ]$, $\eps = 2\,\tilde{\eps} + \delta$, and $x = (-\delta,\ldots,-\delta,v(N) + \delta |U|)$. Then, $x \in C_\eps(N,v)$. Indeed, it is obvious that $x(N) = v(N)$. For $S \notin \mathcal{P}^a$ (thus, $v(S) = 0$) one has
%
%
$
 x(S) = -\delta|S| \geq -\delta|U| \geq 2\,\tilde{\eps} + \delta = v(S) + \eps
$
(the intermediate step hinges on the fact that $\delta|U| = \delta|N| - \delta$). 
If $S \in \mathcal{P}^a$, then
\begin{align*}
 x(S) = & \, x_a + x(S \cap U) = v(N) + \delta |U| - \delta |S \cap U| 
 \\
 \geq & \,
 v(N) + \delta |U| - \delta ( |U| - 1 ) =
 v(N) + \delta \geq v(S) + 2\,\tilde{\eps} + \delta = v(S) + \eps
\end{align*}
(the penultimate step uses $\tilde{\eps} \leq ( \, v(N) - v(S) \, ) \,/\, 2$). Therefore, we get  $\eps^* \geq \eps > 2\,\tilde{\eps}$.

\noindent b) The first term in the minimum comes from the inequality $(iii)$ in~\autoref{t:c>0eps-core} a)
with $S=\{a\}$, while the second from $\eps^* \leq \tilde{\eps}$ in a) since $\bar{\mathcal{P}}^a \subset \mathcal{P}^a$.
\end{proof}


Notice that $\tilde{\eps} \leq \hat{\eps} \leq \bar{\eps}$ always hold. 
Moreover, $\tilde{\eps}$ and $\hat{\eps}$ are both negative since there exists at least a set $S$ such that $v(S) > v(N)$ (see~\autoref{t:c>0core}), while $\bar{\eps}$ could be positive.
Anyway, the upper bound $\bar{\eps}$ is computationally affordable, while $\tilde{\eps}$ and $\hat{\eps}$ generally require the computation of an exponential number of coalition values.
Differently from the balanced case, 
a lower bound similar to~\eqref{e:c>0-lbmono} is not available because an exact formula for the least core value is missing and moreover monotonicity does not hold.

The following examples show that the upper bounds are tight, but do not always coincide with the least core value. 
Moreover, they could be different from each other.

\begin{example}
\label{ex:core-empty}
Consider SESG with $U_1=\{1,2\}$, $U_2=\{3\}$, $p_1=6$, $p_2=4$, $q_3=6$, $c_1 = c_3 = 0$ and $c_2=2$.
We have $
 v(\{ a , 1 , 2 \}) = -2
 \;\;,\;\; 
 v(\{ a , 1 , 3 \}) = 6 
 \;\;,\;\;
 v(\{ a , 2 , 3 \}) = 2$ and $v(N) = 4$. We have $\eps^* = \tilde{\eps}
=\hat{\eps} = \bar{\eps}=-1$, hence the upper bounds are tight.
\end{example}

\begin{example}
Consider SESG with $U_1=\{1,2,3\}$, $U_2=\{4,5,6\}$, $p_1=7$, $p_2=8$, $p_3=10$, $q_4=5$, $q_5=8$, $q_6=10$, $c_1=1$, $c_2=4$, $c_3=5$, $c_4=3$, $c_5=4$, $c_6=1$.
Computations provide $v(N)=5$, $\eps^* = -2/3$ and the upper bounds $\tilde{\eps}=-1/2$, $\hat{\eps}=-1/3$ and $\bar{\eps}=5/7$.
\end{example}



\subsection{The least core value via LP duality}
\label{sec:dual}

The least core value $\eps^*$ can be viewed also as the optimal value  of the linear program
\[
\textstyle
\max \big\{ \, \eps \,:\, 
x(S) \geq v(S) + \eps \;\; S \in \mathcal{P}
\;,\; x(N) = v(N)
\, \big \}
\]
so that linear programming theory can be exploited.
Since the equality constraint reads also $x_a = v(N) - x(U)$, then the variable $x_a$ can be eliminated and the inequality constraints recast accordingly.
Indeed, if $a \in S$, then the constraint reads
$
 v(S) + \eps \leq
 x(S)  = x(S \cap U) + x_a 
 = x(S \cap U) + v(N) - x(U)
 = v(N) - x(N \setminus S).
$
Notice that the constraint simply reads $x(S) \geq \eps$ when $a \notin S$.
Therefore, the computation of $\eps^*$ amounts to the linear program $(P_{LC})$:

\begin{subequations}
\begin{align}
 \max\ & \eps 
 \nonumber
 \\
 & \textstyle
   \eps - x(S) \leq 0
 & S \in \mathcal{P}^{-a} \label{eq:expf-e} 
 \\
 & \textstyle
   \eps + x(N \setminus S) \leq v(N) - v(S)
 & S \in \mathcal{P}^a \label{eq:expfe}
 \end{align}
 \label{eq:expP}
\end{subequations}
\noindent where $\mathcal{P}^{-a} = \mathcal{P} \setminus \mathcal{P}^a$. 
%
The dual problem $(D_{LC})$ thus has a variable $\lambda_S$ for each $S \in \mathcal{P}$:
\begin{subequations}
\begin{align}
\min \, & 
\textstyle \sum_{S \in \mathcal{P}^a} [v(N)-v(S)] \lambda_S & 
\nonumber
\\[2mm]
& 
\textstyle \sum_{S \in \mathcal{P}^{-a}_i} \lambda_S =
\sum_{S \in \mathcal{P}^a_{-i}} \lambda_S 
& i \in U \label{eq:expfD-i} 
\\[2mm]
& 
\textstyle \sum_{S \in \mathcal{P}} \lambda_S = 1
\label{eq:expfD-simplex} 
\\[2mm]
& \textstyle \lambda_S \geq 0
& S \in \mathcal{P} \label{eq:expfD-sign}
\end{align}
\label{eq:expD}
\end{subequations}
\noindent where $\mathcal{P}^{-a}_i = \{ \, S \in \mathcal{P}^{-a} \,:\, i \in S \, \}$ and  $\mathcal{P}^a_{-i} = \{ \, S \in \mathcal{P}^a \,:\, i \notin S \, \}$. 
Clearly the feasible region of $(P_{LC})$ is always nonempty and the optimal value $\eps^*$ is finite. Therefore, also the feasible region of $(D_{LC})$ is nonempty. Indeed, it is easy to show that $\lambda$ whose non-zero values are just  $\lambda_{\{i\}}=1/|N|$ for any $i \in N$ is feasible.

A primal feasible solution $(x,\eps)$ and a dual feasible solution $\lambda$ are optimal if and only if they satisfy the complementary slackness conditions, i.e.,
\begin{subequations}
\begin{align}
 & \lambda_S>0 \text{ and } S \in \mathcal{P}^{-a}
 \quad \Longrightarrow \quad x(S) = \eps, 
 \label{eq:expCS-e} 
 \\
 & \lambda_S>0 \text{ and } S \in \mathcal{P}^{a}
 \quad \Longrightarrow \quad v(N) - v(S) - x(N \setminus S) = \eps.
 \label{eq:expCSe}
\end{align}
\label{eq:expCF}
\end{subequations}
In the balanced case the least core value is already known by~\autoref{t:c>0eps-core} b).
Duality arguments allow  proving the same result, i.e., $\eps^*=\hat\eps$, through a different lens which will be useful to characterize those situations where it holds also in unbalanced games.

\noindent\textbf{Alternative proof of~\autoref{t:c>0eps-core} b).}
Consider the primal solution $(\hat{x},\hat\eps)$ with $\hat{x}=(\hat\eps,\dots, \hat\eps)$ and the dual solution $\hat\lambda$ with
\begin{equation}
\hat{\lambda}_S =
\begin{cases}
 1 \,/\, ( \, |N \setminus \hat{S}| + 1 \, )
 & \text{if } S = \hat{S} \text{ or } S = \{ \, i \, \} \text{ with } i \notin \hat{S},
 \\
 0 & \text{otherwise},
\end{cases}
\label{eq:barlambda}
\end{equation}
where $\hat{S} \in \mathcal{P}^a$ is any coalition providing $\hat\eps$, that is any $\hat{S}$ such that
$
\frac{v(N)-v(\hat{S})}{|N|-|\hat{S} \cap U|}
=
\hat\eps.
$
Notice that~\autoref{t:c>0core} guarantees $\hat\eps\geq 0$ since the game is balanced.

The above pair of solutions satisfy the complementarity slackness conditions by construction.
The primal solution is feasible since each~\eqref{eq:expf-e} reduces to $\hat\eps|S| \geq \hat\eps$ and the inequalities~\eqref{eq:expfe} collapse to the definition of $\hat\eps$.
The dual solution clearly satisfies~\eqref{eq:expfD-simplex} and~\eqref{eq:expfD-sign}. 
If $i \in \hat{S}$, then
$
 \sum_{S \in \mathcal{P}^{-a}_i} \hat{\lambda}_S 
 =
 \hat\lambda_{\{i\}}
 =
 0 =
 \sum_{S \in \mathcal{P}^a_{-i}} \hat{\lambda}_S
$
since $\hat{S} \notin \mathcal{P}^a_{-i}$ and $a \in \hat{S}$. 
If $i \notin \hat{S}$, then
$
 \sum_{S \in \mathcal{P}^{-a}_i} \hat{\lambda}_S 
 =
 \hat{\lambda}_{\{i\}}
 =
 \dfrac{1}{|N \setminus \hat{S}| + 1}
 =
 \hat{\lambda}_{\hat S}
 =
 \sum_{S \in \mathcal{P}^a_{-i}} \hat{\lambda}_S$.
Hence, $\hat\lambda$ satisfies~\eqref{eq:expfD-i} and therefore is feasible. 
As a consequence, the pair of solutions is optimal and the optimal value $\eps^*$ coincides with $\hat\eps$.
\qed

%
%
%

\

Notice that the primal optimal solution $(\hat{x},\hat{\eps})$ in the above proof is feasible for $(P_{LC})$ if and only if $\hat\eps \geq 0$. 
Therefore, the proof is not suitable for the unbalanced case.
Indeed, the same result $\eps^*=\hat\eps$ still holds in particular unbalanced cases where $\hat\eps$ is reached by ``leave-one-out'' coalitions, that is coalitions including all users but one. 


\begin{theorem}
\label{t:epsbar-epshat}
Suppose $C(N,v) = \emptyset$. 
Given any 
$
\hat{S} \in
\arg\min  
\left\{ 
\frac{v(N)-v(S)}{|N|-|S \cap U|}:\ S \in \mathcal{P}^a
\right\},
$
then $\eps^*=\hat{\eps}$ if and only if $|\hat{S}|=|N|-1$ and 
the unique $k \notin \hat{S}$ satisfies
\begin{equation}
\label{e:syst-epsbar-core-empty}
\left\{
\begin{array}{ll}
v(N) \geq v(S)+\hat{\eps} 
& \quad \forall\ S \in \mathcal{P}^{a}_k,
\\[3mm]
v(N) \geq v(S) + 2\,\hat{\eps} 
& \quad \forall\ S \in \mathcal{P}^{a}_{-k}.
\end{array}
\right.
\end{equation}
\end{theorem}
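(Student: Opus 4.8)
The plan is to reduce the statement to a cleaner claim about the strong $\hat\eps$-core. Since $\eps^*\le\hat\eps$ always holds — inequality $(iii)$ of~\autoref{t:c>0eps-core} a) is valid whenever a strong $\eps$-core is nonempty, irrespective of the sign of $\eps$ — and since $C_{\eps^*}(N,v)\ne\emptyset$ by definition while $C_\eps(N,v)$ shrinks as $\eps$ increases, one has $\eps^*=\hat\eps$ if and only if $C_{\hat\eps}(N,v)\ne\emptyset$. So I would prove: $C_{\hat\eps}(N,v)\ne\emptyset$ if and only if $|\hat S|=|N|-1$ and the unique $k\notin\hat S$ satisfies~\eqref{e:syst-epsbar-core-empty}. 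Two standing facts of the unbalanced case will be used repeatedly: $\hat\eps<0$ (by~\autoref{t:c>0core}, $C(N,v)=\emptyset$ produces some $S\in\mathcal P^a$ with $v(S)>v(N)$, and the corresponding ratio is negative), and $|U|\ge2$ (otherwise $v\equiv0$ and the core is nonempty). I also note that for $S\in\mathcal P^a$ the denominator $|N|-|S\cap U|$ is always at least $2$.

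For the ``if'' direction I would simply display a witness in $C_{\hat\eps}(N,v)$: with $\hat S=N\setminus\{k\}$, set $x_k=\hat\eps$, $x_j=0$ for every other user $j$, and $x_a=v(N)-\hat\eps$. Checking membership is a short case split: efficiency $x(N)=v(N)$ is clear; for $S\in\mathcal P^{-a}$ one has $x(S)\in\{0,\hat\eps\}\ge\hat\eps=v(S)+\hat\eps$ because $\hat\eps<0$; for $S\in\mathcal P^a$ with $k\in S$ one computes $x(S)=v(N)$, so the required $x(S)\ge v(S)+\hat\eps$ is exactly the first line of~\eqref{e:syst-epsbar-core-empty}; and for $S\in\mathcal P^a$ with $k\notin S$ one computes $x(S)=v(N)-\hat\eps$, so $x(S)\ge v(S)+\hat\eps$ is exactly the second line. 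This yields $\eps^*\ge\hat\eps$, hence equality. (Alternatively one could certify optimality of $(x,\hat\eps)$ in $(P_{LC})$ by a dual multiplier concentrated on $\hat S$ and on the singletons outside $\hat S$, in the spirit of the alternative proof of~\autoref{t:c>0eps-core} b), but the primal check above already suffices.)

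For the ``only if'' direction I would start from an arbitrary $x\in C_{\hat\eps}(N,v)$ and first show $\hat S$ is ``leave-one-out''. Since $a\notin N\setminus\hat S$ we have $v(N\setminus\hat S)=0$, so $v(N)=x(\hat S)+x(N\setminus\hat S)\ge(v(\hat S)+\hat\eps)+\hat\eps$; combining this with $v(N)-v(\hat S)=\hat\eps\,(|N|-|\hat S\cap U|)$, with $|N|-|\hat S\cap U|\ge2$, and with $\hat\eps<0$ forces $|N|-|\hat S\cap U|=2$, i.e.\ $|\hat S|=|N|-1$; call $k$ the excluded (necessarily user) player. The same chain, now tight, also pins down $x_k=\hat\eps$. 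The crucial step is then the claim that $x(T)\ge0$ for \emph{every} $T\subseteq U\setminus\{k\}$: indeed $\{k\}\cup T\in\mathcal P^{-a}$, so the strong-core inequality reads $\hat\eps\le x(\{k\}\cup T)=x_k+x(T)=\hat\eps+x(T)$. Granting this, the two lines of~\eqref{e:syst-epsbar-core-empty} follow: for $S\in\mathcal P^a_k$ one has $N\setminus S\subseteq U\setminus\{k\}$, hence $v(S)+\hat\eps\le x(S)=v(N)-x(N\setminus S)\le v(N)$; for $S\in\mathcal P^a_{-k}$ one has $N\setminus S=\{k\}\cup T'$ with $T'\subseteq U\setminus\{k\}$, so $x(N\setminus S)=\hat\eps+x(T')\ge\hat\eps$ and therefore $v(S)+\hat\eps\le x(S)=v(N)-x(N\setminus S)\le v(N)-\hat\eps$.

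I expect the delicate point to be the first line of~\eqref{e:syst-epsbar-core-empty} in the necessity part. A generic element of $C_{\hat\eps}(N,v)$ only guarantees $x(N\setminus S)\ge\hat\eps$ for $S\in\mathcal P^a_k$, which merely reproduces the weaker bound $v(N)\ge v(S)+2\hat\eps$ already available from $\hat\eps\le(v(N)-v(S))/(|N|-|S\cap U|)$; the improvement to $x(N\setminus S)\ge0$ materialises only after $x_k=\hat\eps$ has been forced and the strong-core inequalities for the coalitions $\{k\}\cup T$ have been invoked. This mechanism is precisely what produces the $\mathcal P^a_k$ / $\mathcal P^a_{-k}$ asymmetry in the statement, and it is the one step that is not a mechanical rearrangement.
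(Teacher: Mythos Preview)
Your argument is correct. The reduction to ``$C_{\hat\eps}(N,v)\ne\emptyset$'' is sound, the witness in the ``if'' part is exactly the right one, and in the ``only if'' part the chain $v(N)\ge v(\hat S)+2\hat\eps$ combined with $v(N)-v(\hat S)=\hat\eps\,(|N|-|\hat S\cap U|)$ and $\hat\eps<0$ cleanly forces $|N|-|\hat S\cap U|=2$ and $x_k=\hat\eps$; the lemma $x(T)\ge0$ for $T\subseteq U\setminus\{k\}$ via the coalition $\{k\}\cup T$ is precisely the key step and your use of it to derive both lines of~\eqref{e:syst-epsbar-core-empty} is flawless.

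The paper takes a different scaffolding: it works through the LP dual $(D_{LC})$, using the feasible dual solution $\hat\lambda$ supported on $\hat S$ and on the singletons outside $\hat S$ (the one you mention parenthetically). Since $\hat\lambda$ has dual value $\hat\eps$, the equality $\eps^*=\hat\eps$ is recast as the existence of a primal $(x,\eps)$ satisfying complementary slackness with $\hat\lambda$; these conditions force $x_i=\hat\eps$ for every $i\notin\hat S$ and tightness at $\hat S$, whence $\eps=\hat\eps$ and the reduced system~\eqref{e:syst-epsbar}. From there the paper argues by contradiction ($|\hat S|\le|N|-2$ gives two users $i,j\notin\hat S$ with $x_i=x_j=\hat\eps$, so $x(\{i,j\})=2\hat\eps<\hat\eps$), and then derives~\eqref{e:syst-epsbar-core-empty} via the very same $x(T)\ge0$ device you use. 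So the analytic heart is identical; the difference is that you bypass the duality machinery entirely and obtain $x_k=\hat\eps$ from the tightness of the additive chain rather than from complementary slackness. Your route is more self-contained, while the paper's integrates with the LP framework of Section~\ref{sec:dual} that is reused later (e.g.\ in Section~\ref{sec:aggshare}).
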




\begin{proof}
The feasible dual solution $\hat{\lambda}$ defined in~\eqref{eq:barlambda} yields 
the objective value 
$$
    \sum_{S \in \mathcal{P}^a} [v(N)-v(S)]\, \lambda_S
    =
    \dfrac{v(N)-v(\hat S)}{|N \setminus \hat S|+1}
    =
    \dfrac{v(N)-v(\hat S)}{|N| - |\hat S \cap U|}
    =
    \hat{\eps}    
$$
for the dual problem.
Therefore, $\eps^*=\hat{\eps}$ if and only if $\hat{\lambda}$ is an optimal, or equivalently there exists a feasible primal solution $(x,\eps)$ such that they satisfy the complementary slackness conditions~\eqref{eq:expCF}. 
This happens if and only if the linear system 
\begin{align*}
\begin{cases}
 \textstyle x(S) \geq \eps & \quad S \in \mathcal{P}^{-a} 
 \\
v(N) - v(S) \geq x(N \setminus S) + \eps & 
\quad S \in \mathcal{P}^a 
 \\
 x_i = \eps & \quad i \notin \hat{S}
 \\
 v(N)-v(\hat{S}) = x(N \setminus \hat{S}) + \eps &
\end{cases}
\end{align*}
has a solution $(x,\eps)$.
The above equalities imply $\eps=\hat{\eps}$, so that the system reduces to
\begin{align}
\label{e:syst-epsbar}
\begin{cases}
 x(S) \geq \hat\eps 
 & \quad S \in \mathcal{P}^{-a} 
 \\
v(N) - v(S) \geq x(N \setminus S) + \hat\eps 
& \quad S \in \mathcal{P}^a 
 \\
 x_i = \hat\eps 
 & \quad i \notin \hat{S}
\end{cases}
\end{align}
\emph{(Only if):}
Assume $|\hat{S}| \leq |N| - 2$,
namely there exist at least two different users $i, j \notin \hat{S}$, and take $x$ given by~\eqref{e:syst-epsbar}. 
The coalition $S=\{i,j\} \notin \mathcal{P}^{-a}$ provides the contradiction
$
 \hat{\eps} \leq x_i + x_{j} = 2\,\hat{\eps},
$
since $\hat\eps <0$.
Therefore, $|\hat{S}|=|N|-1$ must hold.
Given any $S \in \mathcal{P}^a_k$, then
\begin{align}
\label{e:x>=0LCeps<0}
x(N \setminus S) 
= x(\{k\} \cup (N \setminus S)) - x_k 
= x(\{k\} \cup (N \setminus S)) - \hat\eps
\geq 0
\end{align}
since $\{k\} \cup (N \setminus S) \in \mathcal{P}^{-a}$. Hence, 
$
 v(N) - v(S) \geq x(N \setminus S) + \hat{\eps}
 \geq \hat{\eps}.
$
Given any $S \in \mathcal{P}^a_{-k}$, then
$
v(N)-v(S) 
\geq x(N \setminus S) + \hat\eps
= x(N \setminus (S \cup \{k\})) + x_k + \hat\eps
\geq x_k + \hat\eps
= 2\hat\eps
$
since $k \in N \setminus S$ and $S \cup \{k\} \in \mathcal{P}^a_{k}$.

\noindent\emph{(If):} 
Choose $\check{x}$ given by
\begin{align}
\label{e:xcheck}
\check{x}_i =
\begin{cases}
\hat{\eps} & \text{if $i=k$},
\\
0 & \text{if $i \neq k$}.
\end{cases}
\end{align}
Clearly, $\check{x}$ satisfies the first and the last conditions in~\eqref{e:syst-epsbar} by its own definition, while the middle one collapses to~\eqref{e:syst-epsbar-core-empty}.
\end{proof}

Notice that~\eqref{e:syst-epsbar-core-empty} is equivalent to the feasibility of $\check{x}$ for $(P_{LC})$ which allows obtaining the result since $\check{x}$ and $\hat{\lambda}$ satisfy the complementarity slackness conditions.
The following example shows that the condition on the cardinality of $\hat{S}$ alone is not enough.

\begin{example}
\label{ex:core-empty3}
Consider SESG with $U_1=\{1,2,3\}$, $U_2=\{4,5,6\}$, $p_1=6$, $p_2=9$, $p_3=9$, $q_4=7$, $q_5=2$, $q_6=3$, $c_1=1$, $c_2=c_3=0$, $c_4=3$, $c_5=5$, $c_6=2$.
Computations provide $v(N)=1$, the least core value $\eps^* =\tilde{\eps}=-2$ and $\hat{\eps}=-3/2$ with $\hat{S}=N\setminus\{5\}$. Indeed, the coalition $\{a,2,3,4,6\}$ disproves~\eqref{e:syst-epsbar-core-empty} and therefore $\check{x}=(0,0,0,0,-3/2,0)$ is not feasible.
\end{example}


\autoref{t:epsbar-epshat} provides further insights on the tightness of the upper bounds of Section~\ref{s:eps*unbal}. 
Clearly, $\eps^*=\bar{\eps}$ guarantees $\eps^*=\hat{\eps}$. Unexpectedly the vice versa is also true in light of the condition on the cardinality of $\hat{S}$ in~\autoref{t:epsbar-epshat}.
On the contrary, the tightness of $\hat{\eps}$ obviously guarantees the tightness of $\tilde{\eps}$ but not vice versa, as shown by Example~\ref{ex:core-empty3}.


\section{Aggregator's shares}
\label{sec:aggshare}

In this section we aim at evaluating the maximum and minimum shares, $M_a$ and $m_a$, that the aggregator can obtain by the allocations in the least core $C_{\eps^*}(N,v)$ when $\eps^*=\hat{\eps}$.

Actually, the value of the maximum share $M_a$ follows immediately from the previous section. 
If the game is balanced, then $M_a = v(N) - \eps^*|U|$ since each user must receive at least $\eps^*$ and $( \, \eps^* \,, \ldots \,,\, \eps^* \,,\, v(N) - \eps^*|U| \, ) \in C_{\eps^*}(N,v)$ by~\autoref{t:c>0eps-core}.
If the game is not balanced and $\eps^*=\hat{\eps}$, then $M_a=v(N)-\eps^*$ 
since any allocation $x$ in the least core satisfies $x(U) \geq \eps^*$ (see~\eqref{eq:expf-e} and \eqref{e:x>=0LCeps<0}) and $\check{x} \in C_{\eps^*}(N,v)$, with $\check{x}_k = \eps^*$ and $\check{x}_i=0$ for all $i \neq k$, by~\autoref{t:epsbar-epshat}.

On the contrary, the minimum share $m_a$ depends upon the game at hand.
Clearly, it always holds $m_a = v(N) - M_U$, where $M_U$ is the maximum overall share that users can obtain. 
The computation of this latter quantity amounts to the linear program $(P^*_U)$
\begin{subequations}
\begin{align}
 \max\ \ & x(U)
 \nonumber
 \\
 & x(S) \geq \eps^* && S \in \mathcal{P}^{-a}
   \label{eq:PA-e}
\\
 & x(N \setminus S) \leq v(N) - v(S) - \eps^*
 && S \in \mathcal{P}^a \label{eq:PAe}
\end{align}
\label{eq:PA}
\end{subequations}
whose dual problem $(D^*_U)$ reads
\begin{subequations}
\begin{align}
 \min\ & 
\textstyle \sum_{S \in \mathcal{P}^a} \left[ v(N) - v(S) - \eps^* \right] \lambda_S 
 - \eps^* \sum _{S \in \mathcal{P}^{-a}} \lambda_S 
 \nonumber
 \\[2mm]
 & \textstyle\sum_{S \in \mathcal{P}^a_{-i}} \lambda_S -
   \sum_{S \in \mathcal{P}^{-a}_i} \lambda_S = 1
 & i \in U \label{eq:DAi} 
 \\
 & \textstyle\lambda_S \geq 0 & S \in \mathcal{P}. 
 \label{eq:DAsign}
\end{align}
\label{eq:DA}
\end{subequations}
Notice that the constraints~\eqref{eq:PA} describe the least core (with $x_a$ implicitly given). 
Actually, some variables may happen to be fixed to the value $\eps^*$. 
Indeed, this always happens for those users that do not belong to all the coalitions providing $\hat{\eps}$.

\begin{proposition}
\label{prop:bloc}
Assume $\eps^*=\hat{\eps}$ and let 
$
 \mathcal{R} = \arg\min
 \left\{ 
  \dfrac{v(N) - v(S)}{|N| - |S \cap U|} \,:\, S \in \mathcal{P}^a
 \right\}$ and $S_{\min} = \bigcap_{S \in \mathcal{R}} S$. 
 Then, any $x \in C_{\eps^*}(N,v)$ satisfies $x_i = \eps^*$ for any $i \notin S_{\min}$.
\end{proposition}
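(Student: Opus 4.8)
The plan is to show that for any $i \notin S_{\min}$ there is a coalition $\hat S \in \mathcal{R}$ with $i \notin \hat S$, and then to exploit the tightness of the defining inequality for $\hat S$ together with nonnegativity‑type bounds coming from the complementary coalitions. First I would fix $x \in C_{\eps^*}(N,v)$ and recall that $\eps^*=\hat\eps$ means the minimum in the definition of $\hat\eps$ is attained exactly on the set $\mathcal{R}$. Since $i \notin S_{\min}=\bigcap_{S\in\mathcal{R}}S$, by definition of the intersection there exists $\hat S \in \mathcal{R}$ with $i \notin \hat S$; note $a \in \hat S$ and $\hat S \in \mathcal{P}^a$, so $i \in U$ in particular (the aggregator is a veto player belonging to every $S\in\mathcal{P}^a$, hence $a\in S_{\min}$).

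Next I would combine the two relevant inequalities that $x$ must satisfy. On the one hand, membership in $C_{\eps^*}(N,v)$ forces $x(N\setminus\hat S)\ge v(N\setminus\hat S)+\eps^* = \eps^*$ for each $S$, but more precisely I need the coalition‑wise bound: writing the efficiency constraint as $x(\hat S)=v(N)-x(N\setminus\hat S)$ and the constraint for $\hat S$ as $x(\hat S)\ge v(\hat S)+\eps^*$, one gets
\[
 v(N)-x(N\setminus\hat S)\ \ge\ v(\hat S)+\eps^*,
 \qquad\text{i.e.}\qquad
 x(N\setminus\hat S)\ \le\ v(N)-v(\hat S)-\eps^*.
\]
On the other hand, every singleton and more generally every proper coalition not containing $a$ satisfies $x(T)\ge v(T)+\eps^* = \eps^*$; applied to each singleton $\{j\}$ with $j\in N\setminus\hat S$ this gives $x_j\ge\eps^*$, and $N\setminus\hat S\subseteq U\cup\{a\}$ actually lies in $\mathcal{P}^{-a}$ so $x(N\setminus\hat S)\ge\eps^*$ as well; summing the singleton bounds, $x(N\setminus\hat S)\ge |N\setminus\hat S|\,\eps^*$.

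The crux is then a counting identity: because $\hat S\in\mathcal{R}$ attains the minimum, $v(N)-v(\hat S)=\hat\eps\,(|N|-|\hat S\cap U|)=\eps^*\,(|N\setminus\hat S|+1)$, using $|N|-|\hat S\cap U| = |N\setminus\hat S| + 1$ since $a\in\hat S$. Plugging this into the upper bound on $x(N\setminus\hat S)$ gives $x(N\setminus\hat S)\le \eps^*(|N\setminus\hat S|+1)-\eps^* = \eps^*\,|N\setminus\hat S|$. Comparing with the lower bound $x(N\setminus\hat S)\ge \sum_{j\in N\setminus\hat S} x_j\ge \eps^*\,|N\setminus\hat S|$ forces $x_j=\eps^*$ for every $j\in N\setminus\hat S$, and in particular $x_i=\eps^*$. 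The main obstacle I anticipate is purely bookkeeping: being careful that $N\setminus\hat S$ consists solely of users (so the $\eps^*$ lower bound on each $x_j$ is exactly the individual‑rationality‑type constraint $x_j\ge v(\{j\})+\eps^*=\eps^*$), and getting the cardinality identity $|N|-|\hat S\cap U|=|N\setminus\hat S|+1$ right because of the veto player $a$. Everything else is a one‑line squeeze between a lower and an upper bound that coincide.
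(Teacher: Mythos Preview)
Your proof is correct and follows essentially the same route as the paper's own argument: derive the upper bound $x(N\setminus\hat S)\le v(N)-v(\hat S)-\eps^*$, use the identity $v(N)-v(\hat S)=\eps^*(|N\setminus\hat S|+1)$ for $\hat S\in\mathcal{R}$ to rewrite this as $x(N\setminus\hat S)\le\eps^*|N\setminus\hat S|$, and squeeze against the singleton lower bounds $x_j\ge\eps^*$ to force equality for each $j\in N\setminus\hat S$. The only cosmetic difference is that the paper runs the argument once per $S\in\mathcal{R}$ (thereby covering all of $N\setminus S_{\min}=\bigcup_{S\in\mathcal{R}}(N\setminus S)$ at once), whereas you fix $i\notin S_{\min}$ first and then pick a witnessing $\hat S$; the content is identical.
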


\begin{proof}
Let $S \in \mathcal{R}$ be arbitrary. Then, the following chain of inequalities holds:
\begin{align*}
  x(N \setminus S)
 \leq v(N)-v(S)-\eps^* 
  = |N \setminus S| \, (v(N)-v(S))/(|N \setminus S|+1)
   = \eps^*|N \setminus S|,
\end{align*}
where the inequality follow from the constraints~\eqref{eq:PAe} and the equalities follow by the assumption $\eps^*=\hat{\eps}$. 
Since~\eqref{eq:PA-e} guarantee $x_i \geq \eps^*$ for any $i \in U$, we get $x_i = \eps^*$ for all $i \in N \setminus S$.    
\end{proof}

\begin{corollary}
\label{r:checkx}
If $C(N,v)=\emptyset$ and $\eps^*=\hat{\eps}$, then
$|\mathcal{R}|=1$.
\end{corollary}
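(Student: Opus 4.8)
The plan is a proof by contradiction: I assume $|\mathcal{R}| \geq 2$ and reach a clash with the fact that $\eps^* < 0$, which holds under the present hypotheses because $C(N,v) = \emptyset$ (see~\autoref{t:c>0core}).

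First I would pin down the shape of the minimizers in $\mathcal{R}$. The hypotheses of the corollary are exactly those under which~\autoref{t:epsbar-epshat} applies, namely $C(N,v) = \emptyset$ together with $\eps^* = \hat{\eps}$; moreover the equivalence stated there is valid for \emph{every} element of the argmin set, so each $\hat{S} \in \mathcal{R}$ must satisfy $|\hat{S}| = |N| - 1$. Since any such $\hat{S}$ belongs to $\mathcal{P}^a$, hence contains the aggregator $a$, it has the form $\hat{S} = N \setminus \{k_{\hat{S}}\}$ with $k_{\hat{S}} \in U$, and distinct coalitions of $\mathcal{R}$ correspond to distinct excluded users. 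Consequently, if $|\mathcal{R}| \geq 2$, there exist two users $k_1 \neq k_2$ with $N \setminus \{k_1\}, N \setminus \{k_2\} \in \mathcal{R}$, and therefore $k_1 \notin S_{\min}$ and $k_2 \notin S_{\min}$. This is the delicate point of the argument: it is essential that~\autoref{t:epsbar-epshat} forces the cardinality $|\hat{S}| = |N| - 1$ simultaneously for all minimizers, since this is what rules out strict inclusions inside $\mathcal{R}$ and guarantees that $|\mathcal{R}| \geq 2$ really removes two distinct users from $S_{\min}$.

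Then I would close the argument via~\autoref{prop:bloc}: every allocation $x$ in the least core $C_{\eps^*}(N,v)$ --- which is nonempty since $\eps^*$ is attained --- satisfies $x_{k_1} = \eps^*$ and $x_{k_2} = \eps^*$. Picking such an $x$ and writing the defining inequality of $C_{\eps^*}(N,v)$ for the coalition $\{k_1, k_2\} \in \mathcal{P}^{-a}$, for which $v(\{k_1, k_2\}) = 0$, I get $2\eps^* = x_{k_1} + x_{k_2} = x(\{k_1, k_2\}) \geq v(\{k_1, k_2\}) + \eps^* = \eps^*$, hence $\eps^* \geq 0$, contradicting $\eps^* < 0$; therefore $|\mathcal{R}| = 1$. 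An alternative ending bypassing~\autoref{prop:bloc}: apply the system~\eqref{e:syst-epsbar-core-empty} of~\autoref{t:epsbar-epshat} to $\hat{S}_1 = N \setminus \{k_1\}$ and evaluate it at $\hat{S}_2 = N \setminus \{k_2\}$, which lies in $\mathcal{P}^a_{k_1}$ because $k_1 \neq k_2$; this yields $v(N) \geq v(\hat{S}_2) + \hat{\eps}$, and since $|N| - |\hat{S}_2 \cap U| = 2$ membership of $\hat{S}_2$ in $\mathcal{R}$ forces $v(\hat{S}_2) = v(N) - 2\hat{\eps}$, so $\hat{\eps} \geq 0$ again follows, against $\hat{\eps} = \eps^* < 0$.
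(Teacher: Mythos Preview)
Your proof is correct and follows essentially the same route as the paper: assume $|\mathcal{R}|\ge 2$, use \autoref{t:epsbar-epshat} to force every minimizer to be a leave-one-out coalition, pick two distinct excluded users $k_1,k_2$, invoke \autoref{prop:bloc} to get $x_{k_1}=x_{k_2}=\eps^*$, and then the least-core constraint for the coalition $\{k_1,k_2\}$ yields $2\eps^*\ge\eps^*$, contradicting $\eps^*<0$. The alternative ending via~\eqref{e:syst-epsbar-core-empty} is a neat self-contained variant not in the paper, but the primary argument matches.
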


\begin{proof}
If $|\mathcal{R}| \geq 2$, then $N \setminus\{k_1\}, N \setminus\{k_2\} \in \mathcal{R}$ for some $k_1, k_2 \in U$ by~\autoref{t:epsbar-epshat}.
Therefore, any $x \in C_{\eps^*}(N,v)$ satisfies $x_{k_1}=x_{k_2}=\eps^*$ that contradicts the constraint~\eqref{eq:PA-e}
as $\eps^*<0$.
\end{proof}

\begin{remark}
\label{rem:bbma}
Consider SESG in the case of the big-boss game considered in Proposition~\ref{prop:ex1-bbg-clan} c) (the case d) is analogous).
By Remark~\ref{rem:bbg-clan-c0}, $\eps^*=0$ and moreover $S_{\min}=U_2 \cup \{a\}$ since $M^v(i)=0$ holds if and only if $i \in U_1$.
Proposition~\ref{prop:bloc} implies that any $x \in C_{\eps^*}(N,v)$ satisfies $x_i=0$ for any $i \in U_1$.
Hence, the feasible region of $(P^*_U)$
reduces to the vectors $x$ with the above zero values and $x_i \in [0, q_i]$ for any $i \in U_2$, in accordance also with~\cite[Theorem 3.2]{MuNaPoTi88}.
Therefore, we have $m_a=0$ and $M_a=v(N)$.
\end{remark}

\begin{remark}
Consider SESG when it is a clan game with $U_1=\{u_1\}$ and $q_j=q$ for any $j \in U_2$. 
Two cases occur: if $p_1 \geq q|U_2|$, then $S_{\min} =\{a\}$ if $|U_2|=2$, or $|U_2|=3$ and $c_1 \geq q/2$, 
or $|U_2|=4$ and $c_1=q$; $S_{\min} =\{a,u_1\}$ otherwise.
If $p_1 < q|U_2|$, then $S_{\min} = \{a\}$ if $c_1 = p_1 \leq q(|U_2|-1)$,
or $|U_2|=2$, $p_1>q(|U_2|-1)$ and $c_1 \geq 2q-p_1$,
or $|U_2|=3$, $p_1>q(|U_2|-1)$ and $c_1 \geq 5q - 3p_1/2$; 
$S_{\min} = \{a,u_1\}$ otherwise.
No matter the value of the ratio $p_1/q$, whenever $S_{\min}=\{a\}$ Proposition~\ref{prop:bloc} guarantees  $x_i=\eps^*$ for any $i \in U$.
Since $\eps^*$ is given by \eqref{e:clan1-eps*} or \eqref{e:clan2-eps*} according to the different settings, it always holds $\eps^*=v(N)/|N|$ and hence $m_a=M_a=\eps^*$. 

Whenever $S_{\min}=\{a,u_1\}$, Proposition~\ref{prop:bloc} guarantees that the least core reduces to
$$
C_{\eps^*}(N,v)=
\left\{
x \in \R^{|N|}:\ 
x_i=\eps^*, \  i \in U_2, \quad 
x_1 \geq \eps^*, \quad
x_a \geq \eps^*, \quad 
x_1+x_a = v(N)-\eps^*|U_2|
\right\},
$$
thus $m_a=\eps^*$, while $M_a>m_a$ since \eqref{e:clan1-eps*}--\eqref{e:clan2-eps*} guarantee $\eps^* < v(N)/|N|$ in these situations.
\end{remark}

Beyond the above clan games, there are more general situations where it is not possible to give the aggregator less than the maximum, that is $M_a = m_a$.
In the balanced case it happens if and only if $S_{\min}$ reduces to the aggregator only. Indeed, the following characterizations of this situation hold.

\begin{theorem}
\label{teo:maxaggr}
Suppose $C(N,v) \neq \emptyset$. The following statements are equivalent:

\begin{enumerate}[(i)]
 \item $m_a = M_a$ 
 
 \item the optimal solution of $(P^*_U)$ is $x^*=(\eps^*,\dots,\eps^*)$
 
 \item $C_{\eps^*}(N,v)=\{(x^*,M_a)\}$
 
 \item $S_{\min} = \{ \, a \, \}$.
\end{enumerate}
\end{theorem}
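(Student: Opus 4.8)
The plan is to prove the four conditions equivalent by the cyclic chain $(i)\Rightarrow(ii)\Rightarrow(iii)\Rightarrow(iv)\Rightarrow(i)$, relying throughout on three facts recorded up front. First, since $C(N,v)\neq\emptyset$ we have $\eps^*=\hat\eps\geq 0$ by \autoref{t:c>0eps-core}~b), and $M_a=v(N)-\eps^*|U|$, while $m_a=v(N)-M_U$ with $M_U$ the optimal value of $(P^*_U)$, whose feasible region is exactly the projection of the least core onto the user coordinates (with $x_a=v(N)-x(U)$). Second, $x^*=(\eps^*,\dots,\eps^*)$ is always feasible for $(P^*_U)$: constraint~\eqref{eq:PA-e} reads $\eps^*|S|\geq\eps^*$, true since $\eps^*\geq 0$, and \eqref{eq:PAe} reduces to $\eps^*\leq (v(N)-v(S))/(|N|-|S\cap U|)$, which is exactly the definition of $\hat\eps=\eps^*$. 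Third, every feasible point of $(P^*_U)$ has $x(U)\geq\eps^*|U|$, by summing the singleton instances $x_i\geq\eps^*$ of \eqref{eq:PA-e}.

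The arc $(i)\Rightarrow(ii)\Rightarrow(iii)$ is then short. If $m_a=M_a$ then $M_U=v(N)-m_a=\eps^*|U|$, so the feasible $x^*$ (with objective $\eps^*|U|$) is optimal, and any other optimum $x$ satisfies $x(U)=\eps^*|U|$ together with $x_i\geq\eps^*$, hence $x=x^*$: this is $(ii)$. If $x^*$ is the optimal solution, then $M_U=\eps^*|U|$, so any point of the least core, restricted to $U$, obeys $\eps^*|U|\leq x(U)\leq M_U=\eps^*|U|$, forcing $x=x^*$ and $x_a=v(N)-\eps^*|U|=M_a$; thus $C_{\eps^*}(N,v)=\{(x^*,M_a)\}$, which is $(iii)$.

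For $(iv)\Rightarrow(i)$: if $S_{\min}=\{a\}$, then \autoref{prop:bloc} (applicable since $\eps^*=\hat\eps$) gives $x_i=\eps^*$ for every $i\in U$ and every $x\in C_{\eps^*}(N,v)$, so $M_U=\eps^*|U|$ and $m_a=v(N)-\eps^*|U|=M_a$. The substantial implication is $(iii)\Rightarrow(iv)$, which I would prove by contraposition. Assume $S_{\min}\neq\{a\}$; since $a$ belongs to every $S\in\mathcal{R}$, there is a user $j$ contained in every minimizing coalition. Put $\delta:=\min\{\,v(N)-v(S)-\eps^*(|N|-|S\cap U|)\,:\,S\in\mathcal{P}^a,\ j\notin S\,\}$; the index set is finite and nonempty (it contains $\{a\}$), and $\delta>0$ because any such $S$ omits $j$ and is therefore not in $\mathcal{R}$, so its ratio strictly exceeds $\hat\eps=\eps^*$. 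Define $x$ by $x_j=\eps^*+\delta$, $x_i=\eps^*$ for the remaining users, and $x_a=v(N)-x(U)=M_a-\delta$. One checks $(x,x_a)\in C_{\eps^*}(N,v)$: \eqref{eq:PA-e} holds since all $x_i\geq\eps^*\geq 0$; in \eqref{eq:PAe}, for $S\in\mathcal{P}^a$ with $j\in S$ the left-hand side is $\eps^*|N\setminus S|$ and the inequality is again the definition of $\hat\eps$, while for $S\in\mathcal{P}^a$ with $j\notin S$ the left-hand side is $\eps^*|N\setminus S|+\delta$ and the inequality holds exactly by the choice of $\delta$. Since $x_j>\eps^*=x^*_j$, the point $(x,x_a)$ lies in the least core but differs from $(x^*,M_a)$, contradicting $(iii)$.

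I expect the main obstacle to be precisely this perturbation step: one must choose the increment $\delta$ for the single coordinate $x_j$ to be strictly positive yet small enough that none of the exponentially many constraints~\eqref{eq:PAe} is violated, and the key structural insight is that these constraints split into those whose coalition contains $j$ (already tight or slack by the definition of $\hat\eps$, hence untouched by the perturbation) and those whose coalition omits $j$ (exactly the coalitions outside $\mathcal{R}$, which therefore leave strictly positive slack bounded below by $\delta$). Everything else reduces to bookkeeping with the definitions of $M_a$, $m_a$, $\hat\eps$, $S_{\min}$ and $(P^*_U)$.
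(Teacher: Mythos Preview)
Your proof is correct. The implications $(i)\Rightarrow(ii)\Rightarrow(iii)$ and $(iv)\Rightarrow(i)$ match the paper's treatment, but for the key step the two arguments diverge.

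The paper proves $(ii)\Rightarrow(iv)$ via LP duality: it takes an optimal dual solution $\lambda^*$ of $(D^*_U)$ and observes that the active primal constraints at $x^*$ are exactly the singletons $\{i\}$ and the coalitions in~$\mathcal R$; complementary slackness then reduces constraint~\eqref{eq:DAi} to
\[
\sum_{S\in\mathcal R:\,i\notin S}\lambda^*_S=\lambda^*_{\{i\}}+1\qquad(i\in U),
\]
so the left-hand side is positive and for each $i$ some $S\in\mathcal R$ omits~$i$, giving $S_{\min}=\{a\}$. Your route is purely primal: assuming $S_{\min}\supsetneq\{a\}$, you pick a user $j$ common to all coalitions in~$\mathcal R$, observe that every constraint~\eqref{eq:PAe} with $j\notin S$ has strictly positive slack (since $S\notin\mathcal R$), set $\delta$ equal to the minimum such slack, and perturb $x^*$ in coordinate~$j$ by~$\delta$ to produce a second least-core point. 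The verification splits cleanly into the two cases $j\in S$ and $j\notin S$, exactly as you describe, and the nonemptiness of the index set follows from $\{a\}\in\mathcal P^a_{-j}$.

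Your argument is more elementary---it avoids setting up $(D^*_U)$ and invoking complementary slackness---and it makes explicit the geometric reason why the least core degenerates to a point: all the tight constraints pass through $x^*$ and pin down every coordinate. The paper's dual argument is shorter once the LP machinery of Section~\ref{sec:dual} is in place, and it aligns with the proof technique used for the companion result in the unbalanced case (\autoref{teo:maxaggr-empty}).
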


\begin{proof}
\noindent The equivalence between (i), (ii) and (iii) is straightforward since~\autoref{t:c>0eps-core} a) guarantees that $(x^*,M_a) \in C_{\eps^*}(N,v)$ always holds.
Moreover, (iv) implies all of them thanks to Proposition~\ref{prop:bloc}. 
Vice versa, assume that $x^*$ is the optimal solution of $(P^*_U)$.
The active constraints at $x^*$ correspond to the singletons $\{i\}$ for all $i \in U$ and all the sets $S \in \mathcal{R}$ thanks to~\autoref{t:c>0eps-core} b). 
Consider an optimal solution $\lambda^*$ of $(D^*_U)$:
the constraints~\eqref{eq:DAi} read
$
 \sum_{S \in \mathcal{R} \,:\, i \notin S} \lambda^*_S 
 = 
 \lambda^*_{\{ \, i \, \}} + 1$ for any $i \in U$, 
 in light of complementary slackness. 
Therefore, the feasibility of $\lambda^*$ guarantees that the left-hand side of the above equation is positive.
Hence for any $i \in U$ there must exist $S \in \mathcal{R}$ such that $i \notin S$, which implies $S_{\min}=\{a\}$.
\end{proof}

In the unbalanced case the characterization of the equality $m_a=M_a$ is more involved. 
Notice that $S_{\min}$ never reduces to the singleton $\{a\}$ and actually it includes all users but one, namely $S_{\min}=\hat{S}=N \setminus\{k\}$ for some $k \in U$, according to~\autoref{t:epsbar-epshat} and Corollary~\ref{r:checkx}.
Indeed, the role of $S_{\min}$ in~\autoref{teo:maxaggr} is now played by the intersection of a suitable family of coalitions.

\begin{theorem}
\label{teo:maxaggr-empty}
Suppose $C(N,v)=\emptyset$ and $\eps^*=\hat{\eps}$. The following statements are equivalent:
\begin{enumerate}[(i)]
 \item $m_a = M_a$
 
 \item the optimal solution of $(P^*_U)$ is 
$\check{x}$ defined as
$\check{x}_i =
\begin{cases}
\hat{\eps} & \text{if $i=k$},
\\
0 & \text{if $i \neq k$}.
\end{cases}$

 \item $C_{\eps^*}(N,v) = \{(\check{x},M_a)\}$
 
 \item there exists a family of coalitions $\{S_l\}_{l \in I} \subset \mathcal{P}^a_k$ such that
 \begin{enumerate}[a)]
 \item $\bigcap\limits_{l \in I} S_l = \{a,k\}$,

 \item $v(S_l)=v(N)-\eps^*$ for any $l \in I$,
 \end{enumerate}
 where $I$ is an index set with $|I| \leq |U|-1$.

\end{enumerate}
\end{theorem}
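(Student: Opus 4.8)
The plan is to prove the four statements equivalent along the cycle (iii)$\Rightarrow$(ii)$\Rightarrow$(i)$\Rightarrow$(iii), together with (iv)$\Rightarrow$(iii) and (iii)$\Rightarrow$(iv), the last implication being the only substantial one. First I would record what is already available under the standing hypotheses $C(N,v)=\emptyset$ and $\eps^*=\hat{\eps}$: by~\autoref{t:epsbar-epshat} and Corollary~\ref{r:checkx} one has $\hat{S}=N\setminus\{k\}$ for the unique $k\in U$ and $\mathcal{R}=\{\hat{S}\}$, hence $S_{\min}=\hat{S}$, so Proposition~\ref{prop:bloc} gives $x_k=\eps^*$ for every $x\in C_{\eps^*}(N,v)$, and the defining inequality $x(\{i,k\})\ge v(\{i,k\})+\eps^*=\eps^*$ then yields $x_i\ge\eps^*-x_k=0$ for every $i\in U\setminus\{k\}$. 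Recall also $\check{x}\in C_{\eps^*}(N,v)$, $M_a=v(N)-\eps^*$, and $m_a=v(N)-M_U$ with $M_U$ the optimum of $(P^*_U)$; moreover $\eps^*=\hat{\eps}$ together with Proposition~\ref{prop:core-empty-ub} and $\tilde{\eps}\le\hat{\eps}$ forces $\eps^*=\tilde{\eps}=\hat{\eps}$, i.e.\ $\max\{v(S):S\in\mathcal{P}^a\}=v(N)-2\eps^*$.

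\textbf{The easy equivalences and (iv)$\Rightarrow$(iii).} The equivalence of (i), (ii), (iii) is routine. If $C_{\eps^*}(N,v)=\{(\check{x},M_a)\}$, the feasible set of $(P^*_U)$ is $\{\check{x}\}$, so $M_U=\check{x}(U)=\eps^*$ and $m_a=v(N)-\eps^*=M_a$; conversely $m_a=M_a$ forces $M_U=\eps^*$, hence $x(U)\le\eps^*$ for every $x\in C_{\eps^*}(N,v)$, and since $x_k=\eps^*$ and $x_i\ge0$ for $i\ne k$ this forces $x=\check{x}$ on $U$ and $x_a=M_a$ by efficiency. For (iv)$\Rightarrow$(iii): given such a family, each constraint~\eqref{eq:PAe} at $S_l$ gives $x(N\setminus S_l)\le v(N)-v(S_l)-\eps^*=0$ for $x\in C_{\eps^*}(N,v)$; since $a,k\in S_l$ one has $N\setminus S_l\subseteq U\setminus\{k\}$ where $x\ge0$, so $x$ vanishes on $N\setminus S_l$, and $\bigcup_{l\in I}(N\setminus S_l)=N\setminus\bigcap_{l\in I}S_l=U\setminus\{k\}$ then gives $x=\check{x}$ on $U$.

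\textbf{(iii)$\Rightarrow$(iv).} This I would attack through the duality between $(P^*_U)$ and $(D^*_U)$, in the spirit of the proof of~\autoref{teo:maxaggr}. Since $\check{x}$ is optimal for $(P^*_U)$, there is a dual optimal $\lambda^*$ for $(D^*_U)$ satisfying complementary slackness with $\check{x}$; reading these conditions off---and using that \eqref{e:syst-epsbar-core-empty} holds to describe the active constraints---shows that $\lambda^*_S>0$ forces $S\in\mathcal{P}^{-a}$ with $k\in S$, or $S\in\mathcal{P}^a_k$ with $v(S)=v(N)-\eps^*$, or $S\in\mathcal{P}^a_{-k}$ with $v(S)=v(N)-2\eps^*$. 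For each $i\in U\setminus\{k\}$ the dual constraint~\eqref{eq:DAi} reads $\sum_{S\in\mathcal{P}^a_{-i}}\lambda^*_S=1+\sum_{S\in\mathcal{P}^{-a}_i}\lambda^*_S\ge1$, so there is an active $S^{(i)}\in\mathcal{P}^a$ with $i\notin S^{(i)}$; if $S^{(i)}\in\mathcal{P}^a_k$ we keep it, otherwise we replace it by $S^{(i)}\cup\{k\}\in\mathcal{P}^a_k$, which still excludes $i$. With $I=U\setminus\{k\}$ the family $\{S^{(i)}\}$ has at most $|U|-1$ members and intersection exactly $\{a,k\}$, which is (iv)(a); so (iv) follows once one checks (iv)(b), namely that the value is still $v(N)-\eps^*$ after the replacement step.

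\textbf{Main obstacle.} That last check is the crux: one must show that $S\in\mathcal{P}^a_{-k}$ with $v(S)=v(N)-2\eps^*$, active at $\check{x}$, forces $v(S\cup\{k\})=v(N)-\eps^*$. The bound $v(S\cup\{k\})\le v(N)-\eps^*$ is free from~\eqref{e:syst-epsbar-core-empty}, but the reverse inequality has to be squeezed out of the uniqueness of $\check{x}$ as the feasible point of $(P^*_U)$ together with the identity $\max\{v(S):S\in\mathcal{P}^a\}=v(N)-2\eps^*$; a naive perturbation $\check{x}+t\,e_i$ does not suffice, because the $\mathcal{P}^a_{-k}$-constraint at $S$ itself already pins $x_i$. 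The cleaner route I would take is to eliminate $x_k=\eps^*$ from $(P^*_U)$, rewrite every surviving inequality as $x(B)\le\beta^*_B$ for $B\subseteq U\setminus\{k\}$ with $\beta^*_B=\min\{\,v(N)-v(N\setminus B)-\eps^*,\ v(N)-v(N\setminus(B\cup\{k\}))-2\eps^*\,\}\ge0$, note that (iii) is equivalent to ``for every $i\in U\setminus\{k\}$ there is $B\ni i$ with $\beta^*_B=0$'', and prove that such a $B$ can always be taken with the \emph{first} term of $\beta^*_B$ vanishing---which is exactly the assertion that $N\setminus B\in\mathcal{P}^a_k$ is one of the coalitions required in (iv). Matching this $\beta^*_B$-description with the dual $(D^*_U)$ then closes the argument.
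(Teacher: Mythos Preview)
Your handling of (i)$\Leftrightarrow$(ii)$\Leftrightarrow$(iii) and of (iv)$\Rightarrow$(iii) matches the paper's proof exactly, including the use of Proposition~\ref{prop:bloc} and Corollary~\ref{r:checkx} to pin $x_k=\eps^*$ and to get $x_i\ge 0$ for $i\ne k$.

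For (iii)$\Rightarrow$(iv) you and the paper both exploit complementary slackness for $(P^*_U)$ and $(D^*_U)$, but the paper does \emph{not} go through your ``replacement step''. The paper asserts outright that the constraints of $(P^*_U)$ active at $\check{x}$ are exactly those indexed by $S\in\mathcal{P}^{-a}_k$, by $S=\hat S$, and by $S\in\mathcal{P}^*:=\{S\in\mathcal{P}^a_k:\ v(N)-v(S)=\eps^*\}$. Granting this, every dual optimum $\check\lambda$ is supported on these three families; since $i\in\hat S$ for all $i\ne k$, the constraint \eqref{eq:DAi} for such $i$ becomes
\[
\sum_{S\in\mathcal{P}^a_{-i}\cap\mathcal{P}^*}\check\lambda_S
=\sum_{S\in\mathcal{P}^{-a}_i\cap\mathcal{P}^{-a}_k}\check\lambda_S+1\ge 1,
\]
so there is an $S(i)\in\mathcal{P}^*$ with $i\notin S(i)$, and the family $\{S(i)\}_{i\ne k}\subset\mathcal{P}^a_k$ already satisfies (iv)(a)--(b). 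No passage from a coalition in $\mathcal{P}^a_{-k}$ to one in $\mathcal{P}^a_k$ is ever required.

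So the divergence is this: you allow for further active coalitions $S\in\mathcal{P}^a_{-k}\setminus\{\hat S\}$ with $v(S)=v(N)-2\eps^*$ and then try to ``repair'' them by adding $k$, which forces the unfinished verification that $v(S\cup\{k\})=v(N)-\eps^*$; the paper instead rules such $S$ out from the start. In your $\beta^*_B$ language, the paper's claim is precisely that the second entry of $\beta^*_B$ is strictly positive for every nonempty $B\subseteq U\setminus\{k\}$, so $\beta^*_B=0$ can only come from the first entry---which is exactly (iv). You have correctly located the crux; the cleaner attack is to prove the paper's assertion that $\hat S$ is the unique active coalition in $\mathcal{P}^a_{-k}$, rather than to attempt the replacement. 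Note, however, that the paper does not supply a justification for that assertion either, so the delicate point you flagged is genuinely present in both arguments.
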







\begin{proof}
\noindent The equivalence between (i) and (ii) is straightforward since $(\check{x},M_a) \in C_{\eps^*}(N,v)$ thanks to~\autoref{t:epsbar-epshat}, while the equivalence between (ii) and (iii) follows from 
Remark~\ref{r:checkx}.

Assume (iv) holds and take any $x$ feasible for $(P^*_U)$. 
The constraint~\eqref{eq:PAe} for any coalition $S_l$ of the family reads
$x(N \setminus S_l) \leq~0$. Moreover, Remark~\ref{r:checkx} guarantees $x(N \setminus S_l) \geq 0$ as $k \notin S_l$ and therefore $x_i=0$ for any $i \notin S_l$. 
As a consequence of condition a) $x_k$ is the only non-zero component and takes value $\eps^*$ by Proposition~\ref{prop:bloc}. Hence, $x=\check{x}$ and (iii) follows.

Vice versa, assume (iii), so that $\check{x}$ is the optimal solution of $(P^*_U)$.
The active constraints at $\check{x}$ correspond to coalitions $S \in \mathcal{P}^{-a}_k$, $S=\hat{S}$ and $S \in \mathcal{P}^*$, where 
$
\mathcal{P}^*
:=\{ S \in  \mathcal{P}^a_k:\ v(N)-v(S)=\eps^* \}.
$
Consider an optimal solution $\check{\lambda}$ of $(D^*_U)$:
the constraints~\eqref{eq:DAi} read
$
 \sum_{S \in \mathcal{P}^a_{-i} \cap \mathcal{P}^*} \check\lambda_S 
 = 
 \sum_{S \in \mathcal{P}^{-a}_{i} \cap \mathcal{P}^{-a}_k} \check\lambda_S 
  + 1$ for any $i \in U\setminus\{k\}$
in light of complementary slackness. 
Therefore, the feasibility of  $\check\lambda$ guarantees that the left-hand side of the above equation is positive.
Hence, for any $i \in U\setminus\{k\}$ there must exist $S(i) \in \mathcal{P}^a_{-i} \cap \mathcal{P}^*$.
Therefore, the family $\{S(i)\}_{i \neq k}$ satisfies conditions a) and b). 
\end{proof}

A lower bound for $m_a$ can be obtained by exploiting any partition of the set of users since the corresponding constraints~\eqref{eq:PAe} provide an upper bound on the maximum overall share of the users.

\begin{theorem}
\label{teo:aggr-part}
Let $\mathcal{P}(U)$ be the set of all partitions of the set $U$. Then,
\begin{enumerate}[a)]
 \item \inlineequation[eq:aggr-part]{m_a \geq v(N) - 
       \min \left\{ \, |\mathcal{A}| (v(N)-\eps^*) - \sum\limits_{S \in \mathcal{A}} v(N \setminus S) 
       \,:\, \mathcal{A} \in \mathcal{P}(U) \, \right\},
       }
 
\item equality holds in~\eqref{eq:aggr-part} if and only if there exists an optimal solution $\lambda^*$ of $(D^*_U)$ such that $\mathcal{A^*} = \{ \, S \,:\, \lambda^*_{N \setminus S} > 0 \, \} \in \mathcal{P}(U)$.
\end{enumerate}
\end{theorem}

\begin{proof}
a) Let $\mathcal{A} \in \mathcal{P}(U)$. Then, any feasible $x$ for $(P^*_U)$ satisfies 
$$
\textstyle
x(U) = \sum_{S \in \mathcal{A}} x(S) \leq 
\sum_{S \in \mathcal{A}} \left( v(N) - v(N \setminus S) - \eps^*\right) =
|\mathcal{A}| (v(N) - \eps^*) - \sum_{S \in \mathcal{A}} v(N \setminus S),
$$
where the inequality follows from~\eqref{eq:PAe}. Therefore, 
$
M_U \leq 
\min \big\{ \, |\mathcal{A}| (v(N)-\eps^*) - \sum_{S \in \mathcal{A}} v(N \setminus S) 
       \,:\, \mathcal{A} \in \mathcal{P}(U) \, \big\}$
and the thesis follows since $m_a = v(N) - M_U$.

\noindent b) \emph{(If):} Since $\mathcal{A^*}$ is a partition of $U$, $a \in N \setminus S$ for any set $S$ such that $\lambda^*_{N \setminus S} > 0$. 
For any optimal solution $x^*$ of $(P^*_U)$, the complementary slackness conditions require that the corresponding constraint~\eqref{eq:PAe} is satisfied as an equality, i.e., $x^*(S) = v(N) - v(N \setminus S) - \eps^*$. Therefore, we have
$
M_U = x^*(U) 
= \sum_{S \in \mathcal{A^*}} x^*(S)
= \sum_{S \in \mathcal{A^*}} v(N) - v(N \setminus S) - \eps^*
= |\mathcal{A^*}| (v(N)-\eps^*) - \sum_{S \in \mathcal{A^*}} v(N \setminus S)
\,
$
i.e., \eqref{eq:aggr-part} holds as an equality.

\emph{(Only if):} Let $\mathcal{A^*}$ be a partition of $U$ such that~\eqref{eq:aggr-part} holds as equality. 
Hence, there exists an optimal solution $x^*$ of $(P^*_U)$ such that 
$|\mathcal{A^*}| (v(N)-\eps^*) - \sum_{S \in \mathcal{A^*}} v(N \setminus S) 
= x^*(U) 
= \sum_{S \in \mathcal{A^*}} x^*(S)$.
Since $S \in \mathcal{A^*}$ implies $S \in \mathcal{P}^{-a}$, \eqref{eq:PAe} guarantees that $x^*(S) \leq  v(N) - v(N \setminus S) - \eps^*$ holds for any $S \in \mathcal{A^*}$.
Therefore, the above chain of equalities implies that $x^*(S) = v(N) - v(N \setminus S) - \eps^*$ for all $S \in \mathcal{A^*}$. 
Then, the dual solution $\lambda^*$ defined as follows: $\lambda^*_{S} = 1$ if $N \setminus S \in \mathcal{A^*}$ and $\lambda^*_{S} = 0$ otherwise, satisfies the complementarity slackness together with $x^*$ by construction. Notice that $\lambda^*_S = 0$ for all $S \in \mathcal{P}^{-a}$, and for any $i \in U$ there exists a unique set $S_i \in \mathcal{P}^a$ such that $N \setminus S_i \in \mathcal{A^*}$ and $i \in N \setminus S_i$. 
Hence, for all $i \in U$
$
\sum_{S \in \mathcal{P}^a_{-i}} \lambda^*_S -
\sum_{S \in \mathcal{P}^{-a}_i} \lambda^*_S =
\sum_{S \in \mathcal{P}^a_{-i}} \lambda^*_S =
\lambda^*_{S_i} = 1$, i.e., $\lambda^*$ is feasible for $(D^*_U)$ and hence it is optimal.
\end{proof}

Clearly, the above lower bound is computationally unaffordable. 
Any partition of $U$ provides a weaker bound that on the contrary can be easily computed. 
In particular, the partition made by the singletons provides a lower bound based on the marginal contributions, namely
\begin{align}
\label{e:lbsingle}
m_a \geq v(N)+\eps^*|U|-\sum_{i \in U} M^v(i).
\end{align}
The bound is always tight for any simple ESG that is a big-boss game. 
Indeed, both $m_a=0$ and $\eps^*=0$ hold (see Remarks~\ref{rem:bbma}) and $v(N)$ is the sum of the marginal contributions (see Proposition~\ref{prop:ex1-bbg-clan} and Remark~\ref{rem:bbg-clan-c0}) in such a case.
Notice that the tightness of the bound implies that the partition provides the minimum in~\eqref{eq:aggr-part} as well.
The bound~\eqref{e:lbsingle} could be tight also for other games as the following example shows.

\begin{example}
Consider SESG with $U_1=\{1,2\}$, $U_2=\{3,4\}$, $p_1=9$, $p_2=8$, $q_3=2$, $q_4=4$, $c_1=c_2=c_3=0$, $c_4 = 1$.
Proposition~\ref{prop:ex1-mono} implies that this is not a big-boss game.
Since $v(N)=5$, $\eps^* = m_a = 0$, $M^v(1)=M^v(2)=0$, $M^v(3)=2$, $M^v(4)=3$, then~\eqref{e:lbsingle} holds as an equality.
\end{example}

Nonetheless, the inequality in~\eqref{eq:aggr-part} can be strict both in the balanced and unbalanced case as the following examples show.

\begin{example}
\label{ex:lb-ma-strict1}
Consider SESG with $U_1=\{1,2\}$, $U_2=\{3,4\}$, 
$p_1=p_2=6$, $q_3=5$, $q_4=10$, 
$c_1=c_2=0$, $c_3=2$, $c_4=7$.
Computations show that $v(N)=3$, $\eps^* =0$, $m_a = 3$, 
while the right hand side in~\eqref{eq:aggr-part} is equal to 0, since the minimum over all possible partitions of $U$ is 3.

\end{example}

\begin{example}
\label{ex:lb-ma-strict2}
Consider SESG with $U_1=\{1,2\}$, $U_2=\{3,4\}$, $p_1=p_2=6$, $q_3=5$, $q_4=10$, $c_1=c_2 = 0$, $c_3 = 4$, $c_4 = 8$.
Computations show that $v(N)=0$, $\eps^* = \hat{\eps}=-1$, $m_a = 1$, 
while the right hand side in~\eqref{eq:aggr-part} equals  0, since the minimum over all possible partitions of $U$ is 0.

\end{example}



\section{Algorithmic approaches to the computation of the least core value}
\label{sec:algo}

When ESG is balanced 
or the conditions in \autoref{t:epsbar-epshat} hold, the least core value $\eps^*$ is provided by the formula
\begin{equation}\label{e:eps*}
\min \left\{ 
\frac{v(N)-v(S)}{|N|-|S \cap U|}:\ 
S \in \mathcal{P}^a
\right\}
\end{equation}
from~\autoref{t:c>0eps-core} or equivalently the optimal value of problem~\eqref{eq:expP}, that has exponentially many constraints, namely one for each possible coalition.
Their na\"ive implementation would require the solution of $2^{|N|-1}$ optimization problems~\eqref{eq:venabler}, i.e., the computation of the values $v(S)$ for all $S \in \mathcal{P}^a$ and $S=N$, which is not doable in practice even for a medium size community. 
In this section, we also explore two different approaches for the computation of $\eps^*$ that appear much more efficient in practice, although both still have worst-case exponential complexity.

While the computation of $v(N)$ is necessary, the explicit computation of some values $v(S)$ might be avoided. 
Indeed, the coalitions with the same number $k$ of users can be considered all together since~\eqref{e:eps*} can be recast as
\begin{equation}\label{e:minratio}
\min \left\{ \frac{v(N) - w_k}{|N|-k}: 
\ k = 0, \dots, |U| - 1
\right\},
\end{equation}
where $w_k=\max \{ v(S): \ S \in \mathcal{P}^a, \  |S \cap U| = k\}$.
If $k=0$ or $k=1$, then $w_k=0$ according to~\eqref{eq:vSdef}. 
Otherwise, it can be computed by introducing ``design variables'' $s_i \in \{0,1\}$ that indicate whether or not user $i \in U$ belongs to the coalition. 
Therefore, $w_k$ is the optimal value of the following optimization problem that is built  encompassing all the optimization problems~\eqref{eq:venabler} with $k$ users into a unique one:
\begin{subequations}
\begin{align}
\max\ & 
\sum_{i \in U}[ f_i(z_i,y_i) - c_i s_i - b_i ] +
f_a \left( \sum_{i \in U} y^+_i \right) &
\label{eq:venabler-of} 
\\
 & (z_i, y_i) \in D_i & i \in U
\label{eq:venabler-D} 
\\
& \sum_{i \in U} y^+_i = \sum_{i \in U} y^-_i
\label{e:balance2}
\\
& \sum_{i \in U} s_i = k
\label{eq:eps*compactk-k} 
\\
  & 0 \leq y_i \leq  s_i \, \bar{y}_i 
  & i \in U
	\label{eq:eps*compact1-semicont} 
\\
& s_i \in \{0,1\} & i \in U
    \label{eq:eps*compact1-zbin}
\end{align}
\label{e:form-k}
\end{subequations}
where $\bar{y}_i \in \R^{2m}_+$ is any upper bound for the projection of the bounded $D_i$ on the $y_i$ space.
 
Actually, the computation of the least core value can be performed through a unique optimization problem. 
Indeed, the coalition with 0 or a single user entail the upper bound $v(N)/|N|$ and \eqref{e:minratio} can be considered only for $k \geq 2$.
This latter quantity, i.e.,
$$
\eta^a_+ 
:=
\min \left\{ \frac{v(N) - w_k}{|N|-k} : \ k = 2, \dots, |U| - 1
\right\}
=
\min \left\{ \frac{v(N) - v(S)}{|N|-|S \cap U|} : \ S \in \mathcal{P}^a_+
\right\},
$$ 
amounts to the optimal value of the following program
\begin{subequations}
 \begin{align}
  \min\ & \eps \label{eq:eps*compact1-obj} 
  \\
  & \eps \left( |N| - \sum_{i \in U} s_i \right) 
  \geq 
  v(N) - \sum_{i \in U} [ f_i(z_i,y_i) - c_i s_i - b_i ] - 
  f_a \left( \sum_{i \in U} y^+_i \right)
  \label{eq:eps*compact1-nonlin} 
  \\
  & (z_i, y_i) \in D_i & i \in U
    \label{eq:eps*compact1-ind} 
    \\
& \sum_{i \in U} y^+_i = \sum_{i \in U} y^-_i
\label{e:balance3}
\\
& 2 \leq \sum_{i \in U} s_i \leq |U|-1
\label{eq:eps*compactk-2} 
\\
  & 0 \leq y_i \leq  s_i \, \bar{y}_i 
  & i \in U
	\label{eq:eps*compact1-semicont2} 
\\
& s_i \in \{0,1\} & i \in U
    \label{eq:eps*compact1-zbin2}
 \end{align}
 \label{e:form-compact}
\end{subequations}
Indeed, when the variables $s_i$ have been fixed, a unique coalition $S$ is at hand and the constraint~\eqref{eq:eps*compact1-nonlin} actually becomes  
\[
 \eps \geq (v(N) - v(S)) / (|N| - |S \cap U|)
\]
since any optimal solution of~\eqref{eq:venabler} provides the minimum value for the right-hand side of~\eqref{eq:eps*compact1-nonlin}. 
To achieve optimality, $S$ will be chosen as the coalition that minimizes the right-hand side in the above formula.

If $D_i$ is a polyhedron and $f_i$ and $f_a$ are linear, then each problem~\eqref{eq:venabler} is a Linear Program, while each problem~\eqref{e:form-k} is a Mixed-Integer Linear Program. 
Anyway, the computation of $\eps^*$ through~\eqref{e:form-k} requires the resolution of only $|U|-2$ MILPs.
On the contrary, the computation through~\eqref{e:form-compact} calls for a unique mixed-integer problem, but the constraint~\eqref{eq:eps*compact1-nonlin} is quadratic and non-convex so that~\eqref{e:form-compact} is a Mixed-Integer Quadratically Constrained Linear Program.
In this framework, problems~\eqref{eq:expP}, \eqref{e:form-k} and \eqref{e:form-compact}
can be efficiently solved by handling state-of-the-art general-purpose optimization solvers in suitable ways.

Since it has an exponential number of constraints, problem~\eqref{eq:expP} can be solved by a \emph{row generation} technique following the approach developed in~\cite{FiBiFrPaPo25} in a more general context.
Since ESG is balanced, the constraints~\eqref{eq:expf-e} reduce to only $x_i \geq \eps$ for all $i \in U$ as the others become redundant. 
On the contrary, the constraints~\eqref{eq:expfe} are   considered only for a small subset of $\mathcal{P}^a$.
The optimal solution $(x^\diamond, \eps^\diamond)$ of the resulting \emph{master problem} is analyzed. 
If it is not feasible for the overall problem~\eqref{eq:expP}, then the master problem is updated adding the most violated constraint at $(x^\diamond, \eps^\diamond)$.
This can be done by solving
\begin{equation}
 \max \{ v(S) + x^\diamond(N \setminus S)
 : \ S \in \mathcal{P}^a \}.
 \label{eq:separation}    
\end{equation}
Indeed, if the optimal value is less or equal to $v(N)-\eps^\diamond$, then $\eps^*=\eps^\diamond$. 
Otherwise, the constraint~\eqref{eq:expfe} corresponding to an optimal coalition is added to the master problem and the process is iterated.
Problem~\eqref{eq:separation} can be formulated as the mixed-integer program obtained from~\eqref{e:form-k} by adding the scalar product $(1-s)^\top x^\diamond$
to the objective function and replacing constraint~\eqref{eq:eps*compactk-k}  by~\eqref{eq:eps*compactk-2}.

Problems~\eqref{e:form-k} can be solved iteratively for $k=2,\dots,|U|-1$ independently of each other. 
However, some techniques could be used to speed-up their resolution.
Indeed, an initial upper bound $\eps^\uparrow$ for the least core value is available, namely $v(N)/|N|$ since $w_0=w_1=0$, and it can be improved when the resolution of~\eqref{e:form-k} for a given $k$ provides a better value, i.e.,  when $v(N) - \eps^\uparrow ( |N| - k ) < w_k$ holds.
This allows to put an \emph{upper threshold} on the objective function. 
In fact, all algorithms for (exactly) solving problems like~\eqref{e:form-k} routinely compute upper bounds $\bar{w}_k \geq w_k$ that are progressively revised until they converge to the optimal value. 
Whenever a bound is found such that $v(N) - \eps^\uparrow (|N| - k) \geq \bar{w}_k$ holds, the resolution can be stopped.
Clearly, the order of resolution of the problems may play a significant role in the potential speed-up and the increasing and decreasing order for $k$ are the most natural choices.

Overall, the reformulations of the least core value through the above optimization problems lead to five exact procedures: the row generation approach to solve~\eqref{eq:expP} (shortly RG), the resolution of the problems~\eqref{e:form-k} iteratively for all $k$ in no particular order (IT), with early termination via upper thresholds in increasing and decreasing order (ITI and ITD, respectively), and the resolution of the compact formulation~\eqref{e:form-compact} with a quadratic constraint (CQC).

To assess their performances, the approaches have been applied to a set of synthetic instances with $|U| = 10$, 20, 50, 100 and 200 users. 
No admission fees have been set so that ESG is always balanced. 
All instances have been generated from a common set of 10 representative users as in the base configuration in~\cite{FiFP21}: 3 users are pure consumers, while the other 7 are prosumers. 
Larger communities are constructed by replicating this composition so as to preserve comparable aggregate characteristics across sizes, similarly to~\cite{FiBiFrPaPo25}.
Also the objective function and constraints are taken from~\cite{FiBiFrPaPo25}, hence the optimization problems~\eqref{eq:venabler} are linear.  

The code has been written in the {\tt Julia} language, exploiting the {\tt JuMP} modelling system and in particular the {\tt EnergyCommunity.jl} and the {\tt TheoryOfGames.jl} packages in~\cite{EC301,TG200}. 
All the mixed-integer programs have been solved using the latest {\tt Gurobi} solver, version 13.0, using 8 threads on a four 18-core Intel Xeon Gold 6140M 2.30GHz machine with 1.2TB RAM running Centos Linux 7, with default parameters settings. 
The total running time of each approach is reported in Table~\ref{tab:res} for the different sizes of the energy communities.
In the last line, the table reports also the time for computing $v(N)$ and $w_{(|U|-1)}$ only, that together give the upper bound~\eqref{e:leastcore-ub}. 
Indeed, the computation of this upper bound can be viewed as an heuristic approach for the least core value since it relies only on the largest coalitions (LCH). 

\begin{table}[tb]
	\small
\centering
\begin{tabular}{l@{\qquad} ccccc}
 & \multicolumn{5}{c}{$|U|$}\\
 \cmidrule(){2-6}
&  10 &   20 &   50 &   100 &   200 \\
\toprule
RG    & 173 &  474 & 1977 &  8825 & 32690 \\
IT   & 159 & 1972 & 5976 & 33558 &   --    \\
ITI  & 155 & 1604 & 4694 & 19639 &   --    \\
ITD  & 131 &  846 & 1400 &  6781 & 41533 \\
CQC   &  66 &  123 &  140 &   292 &   643 \\
LCH &  43 &  114 &  148 &   264 &   583 \\
\bottomrule
\end{tabular}
\caption{Running time in seconds for the solution approaches.}
\label{tab:res}
\end{table}

The approaches IT and ITI have proved to be the least efficient by far and have been ran up to $|U| = 100$ only since their (likely prohibitive) resolution for $|U| = 200$ would not have provided valuable new insights. 
In all cases, all the approaches computed the least core value with at least the same first six decimal digits, that is coherent with the standard {\tt 1e-6} relative stopping accuracy of the solver.
Remarkably, this is true even for the heuristic approach LCH, showing that the ``leave-one-out'' coalitions are likely to yield the least core value.
As further evidence, ITD is the unique approach based on problem~\eqref{e:form-k} with an acceptable performance, probably because larger coalitions are considered first in setting the upper thresholds. 
Also, it is generally competitive with the previous RG approach. 
Moreover, both approaches exhibit a quadratic-like growth with the number of users, which is still relatively good considering that a number of $\mathcal{NP}$-hard problems of increasing size are solved.

CQC is by far the most efficient exact approach and it is competitive with the LCH heuristic which is consistently the fastest as expected.
The running time of CQC shows a linear-like growth with $|U|$, therefore making it very likely to efficiently compute the least core value for energy communities with about one thousand users.




\section{Conclusions}
\label{sec:conc}

We modeled a quite general energy community as a cooperative game, where the worth of a coalition is the optimal value of a suitable optimization problem.
In order to identify fair and stable allocations of the overall reward, we focused on the least core of the game. 
In particular, an exact formula and computable bounds for the least core value have been obtained. 
The bounds require the computation of the worth of few coalitions only, precisely those with all prosumers but one.
While in principle the exact formula requires the computation of the worth of all possible coalitions, mixed-integer programs have been exploited to reduce the computational burden that appear to be comparable with that of the bounds.

These results could be exploited for the selection of a specific fair and stable allocation according to some further criterion, for instance by optimizing it over the least core.
Indeed, some computational tests in this direction have already been performed in~\cite{FiBiFrPaPo25}, where the least core value has been computed by the RG technique.
Therefore, the exploitation of the CQC or LHC approaches from the last section would provide a meaningful computational boost.



\section*{Acknowledgements}

\noindent G. Bigi, A. Frangioni, and M. Passacantando are members of  Gruppo Nazionale per l’Analisi Matematica, la Probabilità e le loro Applicazioni (GNAMPA - National Group for Mathematical Analysis, Probability and their Applications) of Istituto Nazionale di Alta Matematica (INdAM - National Institute of Higher Mathematics).
The authors, to various extends, gratefully acknowledge the financial contribution from the European Union -- Next-GenerationEU -- National Recovery and Resilience Plan (NRRP) –- Mission 4, Component 2, Investment n.~1.1 (call PRIN 2022 D.D. 104 02-02-2022, project title ``Large-scale optimization for sustainable and resilient energy systems'', CUP I53D23002310006) and Investment 1.3 (call for tender No.~1561 of 11.10.2022, project code PE0000021, concession decree n.~1561 of 11.10.2022 adopted by Ministero dell'Università e della Ricerca, CUP I53C22001450006, project title ``Network 4 Energy Sustainable Transition –- NEST'', Task 8.4.4).

\bibliographystyle{apalike-ejor}
\bibliography{biblio}

\end{document}